\DeclareSymbolFont{cyrillic}{T2A}{cmr}{m}{n}
\DeclareMathSymbol{\Sha}{\mathalpha}{cyrillic}{216}
\theoremstyle{plain}
\newtheorem{thm}{Theorem}[section]
\newtheorem{cor}[thm]{Corollary}
\newtheorem{lem}[thm]{Lemma}
\newtheorem{prop}[thm]{Proposition}
\newtheorem*{thm*}{Theorem 1.1}
\theoremstyle{definition}
\newtheorem{defn}[thm]{Definition}
\newtheorem{expl}[thm]{Example}
\newtheorem{rem}[thm]{Remark}
\newtheorem*{note}{Notation}
\DeclareMathOperator{\GL}{GL} \DeclareMathOperator{\Ker}{ker}
\DeclareMathOperator{\SL}{SL} \DeclareMathOperator{\PGL}{PGL}
\DeclareMathOperator{\Hom}{Hom} \DeclareMathOperator{\image}{Im}
\DeclareMathOperator{\Sym}{Sym} 
 \DeclareMathOperator{\Gal}{Gal}
\DeclareMathOperator{\Frob}{Frob}
\DeclareMathOperator{\ad}{ad}\DeclareMathOperator{\End}{End}
\DeclareMathOperator{\Ind}{Ind}
\newcommand{\oo}{\mathfrak{o}}
\newcommand{\OO}{\mathcal{O}}
\newcommand{\QQ}{\mathbf{Q}}
\newcommand{\RR}{\mathbf{R}}
\newcommand{\ZZ}{\mathbf{Z}}
\newcommand{\CC}{\mathbf{C}}
\newcommand{\F}{\mathbf{F}}
\newcommand{\DD}{\mathbf{D}}
\newcommand{\p}{\mathfrak{p}}
\newcommand{\ab}{\mathrm{ab}}
\newcommand{\crys}{\mathrm{crys}}
\newcommand{\s}{\mathrm{ss}}
\newcommand{\cln}{\colon}
\newcommand{\mf}[1]{\mathfrak{{#1}}}
\newcommand{\ol}[1]{\bar{{#1}}}
\numberwithin{equation}{thm} \setlength{\textwidth}{5in}
\begin{document}
\title{\bf{Unobstructed Hilbert modular deformation problems}}
\author{Adam Gamzon}
\date{}

\begin{abstract}
Let $\rho_{f,\lambda}$ be the Galois representation associated to a Hilbert newform $f$.  Consider its semisimple mod $\ell$ reduction $\bar{\rho}_{f,\lambda}$.  This paper discusses how, under certain conditions on $f$, the universal ring for deformations of $\bar{\rho}_{f,\lambda}$ with fixed determinant is unobstructed for almost all primes.  We follow the approach of Weston, who carried out a similar program for classical modular forms in 2004.  As such, the problem essentially comes down to verifying that various local invariants vanish at all places dividing $\ell$ or the level of the newform.  We conclude with an explicit example illustrating how one can in principle find a lower bound on $\ell$ such that the universal ring for deformations of $\bar{\rho}_{f,\lambda}$ with fixed determinant is unobstructed for all $\lambda$ over $\ell$.
\end{abstract}

\maketitle

\section{Introduction}

Let $f$ be a newform of level $N$ and weight $k \geq 2$.  Let $K_f$ be the number field obtained from $f$ by adjoining its Hecke eigenvalues to $\QQ$.  For each prime $\lambda$ in $K_f$, Deligne constructed a semisimple mod $\ell$ representation $\bar\rho_{f,\lambda}$.  In \cite{Mazur2}, Mazur conjectured that the universal deformation ring of this residual representation $\ol{\rho}_{f,\lambda}$ is unobstructed for almost all $\lambda$.  Weston \cite{Weston2} gave a positive answer to Mazur's question in 2004 assuming that $k \geq 3$.  He was also able to obtain some results for weight two modular forms, showing that Mazur's conjecture holds on a set of primes of density one.   We show that Weston's methodology and results essentially carry over to the Hilbert modular form setting with a few minor adjustments.

More specifically, let $F$ be a totally real extension of $\QQ$ of degree $d>1$ and let $f$ be a Hilbert newform on $F$ of  level $\mathfrak{n} \subset \OO_{F}$ and weight $k = (k_{\tau_1},\dots, k_{\tau_d})$.  Here the $\tau_i$ denote the embeddings of $F$ into $\RR$.  We assume that $k_{\tau_i}\geq 2$ for all $i$ and that they satisfy the parity condition $k_{\tau_1} \equiv \cdots \equiv k_{\tau_d} \bmod 2$.  As in the previous paragraph, let $K_f$ be the number field generated over $\QQ$ by the Hecke eigenvalues of $f$ and let $\OO_{K_f}$ its ring of integers.  For each prime $\lambda$ of $K_f$, let $$\ol{\rho}_{f,\lambda}\cln G_{F,S} \rightarrow \GL_2(k_{f,\lambda})$$ be the semisimple mod $\ell$ Galois representation attached to $f$ by Carayol and Taylor.  Here $k_{f,\lambda} = \OO_{K_f}/\lambda$ and $G_{F,S} = \Gal(F_S,F)$, where $F_S$ is the maximal algebraic extension of $F$, unramified outside of a finite set of places $S = \{v |\mathfrak{n}\ell\} \cup \{v |\infty\}$.

Let $\DD^{\det=\delta}_{\ol{\rho}_{f,\lambda}}$ denote the functor that associates to a coefficient ring $R$ the set of all deformations of $\ol{\rho}_{f,\lambda}$ to $R$  with fixed determinant (see section 2 for precise definitions regarding deformation theory).  Note that $\ol{\rho}_{f,\lambda}$ is absolutely irreducible for almost all $\lambda$ \cite[Proposition 3.1]{Dimitrov1}.  For such $\lambda$, the functor $\DD^{\det = \delta}_{\ol{\rho}_{f,\lambda}}$ is representable by the \emph{universal deformation ring} $R_{f,\lambda}$ for deformations with fixed determinant.  Then our main theorem is the following.  

\begin{thm}\label{mainthm}
Set $k_0 = \max_i\{k_{\tau_i}\}$.  Suppose that $f$ has no CM, is not a twist of a base change of a Hilbert newform on $E \subsetneq F$, and $k_0 \geq 3$.  Then $R_{f,\lambda}$ is unobstructed for almost all $\lambda$.
\end{thm}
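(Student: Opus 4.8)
The plan is to reduce to a cohomological vanishing statement and then verify it for all but finitely many $\lambda$. By the deformation-theoretic dictionary recalled in Section~2, $R_{f,\lambda}$ is unobstructed precisely when $H^2(G_{F,S},\ad^0\bar\rho_{f,\lambda})=0$, in which case it is a power series ring over $\OO_{K_f,\lambda}$; so one must prove this vanishing for almost all $\lambda$. Throughout one restricts to $\lambda$ over large primes $\ell$: $\ell$ odd, unramified in $F$, prime to $\mathfrak n$, and large relative to the $k_{\tau_i}$ so that Fontaine--Laffaille theory applies above $\ell$, with $\bar\rho_{f,\lambda}$ absolutely irreducible, and --- here is where the hypotheses that $f$ has no CM and is not a twist of a base change from a proper subfield are used --- with $\bar\rho_{f,\lambda}(G_F)$ containing a conjugate of $\SL_2$ of a subfield of $k_{f,\lambda}$, by Dimitrov's large-image theorem \cite{Dimitrov1}. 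This large-image input yields at once that $H^0(G_{F,S},\ad^0\bar\rho_{f,\lambda})$, $H^0(G_{F,S},\ad^0\bar\rho_{f,\lambda}(1))$, and $H^1$ of the image acting on $\ad^0$ all vanish, facts used repeatedly. Now apply Poitou--Tate global duality: since $\ell$ is odd the trace form makes $\ad^0\bar\rho_{f,\lambda}$ self-dual, so its Cartier dual is $\ad^0\bar\rho_{f,\lambda}(1)$, and the Poitou--Tate exact sequence shows $H^2(G_{F,S},\ad^0\bar\rho_{f,\lambda})=0$ as soon as (i) $H^2(G_{F_v},\ad^0\bar\rho_{f,\lambda})=0$ --- equivalently, by local duality, $H^0(G_{F_v},\ad^0\bar\rho_{f,\lambda}(1))=0$ --- for every $v\in S$, and (ii) the group $\Sha^1_S:=\ker\bigl(H^1(G_{F,S},\ad^0\bar\rho_{f,\lambda}(1))\to\bigoplus_{v\in S}H^1(G_{F_v},\ad^0\bar\rho_{f,\lambda}(1))\bigr)$ is trivial. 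The argument thus splits into the local conditions (i) and the global condition (ii), mirroring Weston.

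For (i): at an archimedean place $G_{F_v}$ has order $2$ and $\ell$ is odd, so all higher cohomology vanishes. At a finite $v\mid\mathfrak n$ with $v\nmid\ell$, local Langlands realizes $\rho_{f,\lambda}|_{W_{F_v}}$ from the fixed local component $\pi_v$, independently of $\lambda$, so $\bar\rho_{f,\lambda}|_{G_{F_v}}$ is the reduction of a fixed Weil--Deligne representation; running through the possibilities for $\pi_v$ (special, ramified principal series, supercuspidal) one checks that $H^0(G_{F_v},\ad^0\bar\rho_{f,\lambda}(1))\ne0$ forces a congruence modulo $\ell$ among a finite list of algebraic numbers attached to $\pi_v$ --- ratios of Frobenius parameters, values of inertial characters, powers of $q_v$ --- which excludes only finitely many $\ell$. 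At a place $v\mid\ell$, for $\ell$ large $\bar\rho_{f,\lambda}|_{G_{F_v}}$ is crystalline with Hodge--Tate weights read off from $k$, and the Fontaine--Laffaille computation of $H^0(G_{F_v},\ad^0\bar\rho_{f,\lambda}(1))$ --- using Dimitrov's description of this restriction together with the hypothesis $k_0\ge3$, which plays the role that weight $\ge3$ plays in Weston's work --- yields vanishing. This place-by-place analysis is the hands-on part of the proof; each step is routine, and together they carve out the finite bad set of $\ell$, which in a concrete example can be made effective, as in the final section.

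For (ii): any class in $\Sha^1_S$ is unramified outside $S$ and locally trivial --- hence crystalline --- at the places of $S$, so $\Sha^1_S$ sits inside the mod-$\ell$ Bloch--Kato Selmer group $H^1_f(F,\ad^0\bar\rho_{f,\lambda}(1))$. One then compares this with the $\lambda$-torsion of the divisible Selmer group $H^1_f(F,\ad^0\rho_{f,\lambda}(1)\otimes\QQ_\ell/\ZZ_\ell)$: the large-image input, together with the vanishing in (i) --- which also makes the local comparison at the places of $S$ exact --- kills the error terms in this comparison for $\ell$ large, so it suffices that the divisible Selmer group vanish for almost all $\lambda$. Since the $\ad^0$ Bloch--Kato Selmer group of $f$ is finite --- for Hilbert newforms a theorem of Dimitrov, resting on the Taylor--Wiles--Kisin-type $R=\mathbf T$ results in the Hilbert setting, with $k_0\ge3$ again ensuring that the relevant adjoint $L$-value is a nonzero critical value --- its order is the $\lambda$-part of a fixed nonzero quantity attached to $f$, and hence it vanishes for all but finitely many $\lambda$. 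Combining (i) and (ii) gives $H^2(G_{F,S},\ad^0\bar\rho_{f,\lambda})=0$ for almost all $\lambda$, as required.

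I expect the principal obstacle to be the global input in (ii): transplanting the finiteness of the adjoint Selmer group, and the accompanying $L$-value non-vanishing, from the elliptic modular setting to Hilbert modular forms over $F$, which forces one to invoke Dimitrov's large-image and Selmer-group results together with the Hilbert-modular $R=\mathbf T$ machinery rather than anything elementary. The second most delicate point is the local analysis at places dividing $\ell$, where the crystalline shape of $\bar\rho_{f,\lambda}|_{G_{F_v}}$ in weight $k$ and the precise use of $k_0\ge3$ must be pinned down; by contrast the local computation at $v\mid\mathfrak n$ is largely bookkeeping.
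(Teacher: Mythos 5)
Your overall skeleton is the same as the paper's: the paper packages the Poitou--Tate step you spell out as its Proposition \ref{unobscriterion} (quoting Weston), handles the places above $\ell$ by a Fontaine--Laffaille computation using $k_0\ge 3$, and gets the global Selmer input from Dimitrov's theorems, whose large-image hypotheses are exactly where no-CM and ``not a twist of a base change'' enter (the twist by the cyclotomic character is then bridged by a Fontaine--Perrin-Riou Euler--Poincar\'e comparison of $H^1_f(G_F,V_\rho(1))$ with $H^1_f(G_F,V_\rho)$, a point you gloss but whose inputs you name). The genuine gap is in your local analysis at the places $v\mid\mf{n}$, $v\nmid\ell$, which you dismiss as ``largely bookkeeping.'' Your claim that nonvanishing of $H^0(G_v,\ol{\varepsilon}\otimes\ad^0\ol{\rho}_{f,\lambda})$ forces a congruence among a finite list of algebraic numbers attached to $\pi_v$ is false precisely when $\pi_v$ is special (a twist of Steinberg): there $\rho_{f,\lambda}|_{G_v}$ is a nonsplit extension of $\chi_\lambda$ by $\varepsilon\chi_\lambda$, so the Frobenius-parameter ratio is exactly $q_v$ and the semisimplification of $\ol{\varepsilon}\otimes\ad^0\ol{\rho}_{f,\lambda}|_{G_v}$ contains the trivial character for \emph{every} $\lambda$; the paper even records that $\dim H^0(G_v,\ol{\varepsilon}\otimes\ad^0\ol{\rho}^{\mathrm{ss}}_\lambda)=1$ for almost all $\lambda$ in this case. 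Whether the actual $H^0$ vanishes depends on whether the extension class survives reduction mod $\lambda$, i.e.\ whether $\ol{\rho}_{f,\lambda}|_{G_v}$ stays ramified, and this is not determined by the fixed Weil--Deligne representation (the monodromy can die mod $\lambda$): it is an integral, global question about $f$.

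The paper's Proposition \ref{specialcase} handles exactly this point, and the argument is not local: assuming the invariants are nonzero, one twists $f$ to a form $f'$ with $v$ exactly dividing the level, observes that the reduction at $v$ becomes unramified, applies the Hilbert-modular level-lowering theorem of Jarvis (under $N_{F/\QQ}(v)\not\equiv 1 \bmod \ell$) to produce a congruent eigenform of level dividing $\mf{n}'/v$, and then uses strong multiplicity one plus the newness of $f$ at $v$ to rule this out for infinitely many $\lambda$. Without this level-lowering-plus-multiplicity-one step (or some substitute), your case (i) does not close at the special places, so your proof as written is incomplete; the principal-series and supercuspidal places are indeed handled by the kind of finite congruence argument you describe (the paper's Lemma \ref{princeseries}, following Weston). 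A secondary, smaller imprecision: Dimitrov's finiteness/vanishing results concern $H^1_f(G_F,V_\rho)$ and $H^1_\Sigma(G_F,A_\rho)$, not the Tate-twisted group you need for $\Sha^1_S$, so you still owe the comparison step (in the paper, the Fontaine--Perrin-Riou formula and a Hodge-filtration computation showing the two dimensions agree); also $k_0\ge3$ is used for the Fontaine--Laffaille valuation argument at $v\mid\ell$, not to guarantee nonvanishing of an adjoint $L$-value.
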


\begin{rem}
Weston \cite{Weston2} did not have this additional condition of deformations with fixed determinant, but in general there are obstructions that come from lifting the determinant, so there is no way around this.  See, for example, \cite[Theorem 10.7.3]{Neukirch} for details about calculating $\dim_{\F_p} H^2(G_{F,S}, \ZZ/p\ZZ)$.
\end{rem}

\begin{rem}
The hypotheses that $f$ has no CM and is not a twist of a base change come from ensuring that certain Selmer groups vanish for almost all $\lambda$ (see Proposition \ref{unobscriterion}, \cite[Theorem B(i)]{Dimitrov3} and \cite[Theorem 2.1]{Dimitrov4}).  It is an open problem as to whether or not these hypotheses can be relaxed.
\end{rem}

The strategy for proving Theorem \ref{mainthm} is to use a generalization of a criterion for unobstructedness (Proposition \ref{unobscriterion}) due to Weston \cite{Weston2}.  Using this proposition and results of Dimitrov (\cite{Dimitrov3} and \cite{Dimitrov4}), the proof is reduced to checking that for all $v\in S$, the local cohomology groups $H^0(G_v, \ol{\varepsilon}\otimes \ad^0 \ol{\rho}_{f,\lambda}) = 0$ for almost all $\lambda$.  Here $\varepsilon$ is the $\ell$-adic cyclotomic character, $G_v$ is a decomposition group a $v$ and $\ad^0 \rho$ denotes the restriction of the adjoint representation of $\rho$ to the trace-zero matrices.  Section 3 addresses those $v \in S$ such that $v \nmid \ell$, while section 4 shows that for almost all $\lambda$, this vanishing cohomology condition holds for $v |\ell$.  We also give a proof of Theorem \ref{mainthm} in section 4.  We conclude in Section 5 with an explicit example of determining a lower bound on $\ell$ such that $R_{f,\lambda}$ is unobstructed for all $\lambda$ over $\ell$.  Here $f$ is the unique newform on $\QQ(\sqrt{5})$ of weight (2,4) and level $(7/2 + \sqrt{5}/2)$.

It is with great pleasure that the author thanks Tom Weston for suggesting this problem and for several helpful suggestions along the way.  Many thanks are also owed to Mladen Dimitrov for patiently answering every question put to him, especially regarding the vanishing of the previously mentioned Selmer groups.  The author also benefited from a number of informative conversations with Ehud de Shalit and for this he is most grateful.  Finally, the author acknowledges with gratitude that this work was produced while he was jointly supported as a Fulbright postdoctoral fellow at the Hebrew University of Jerusalem and under the framework of the ERC grant entitled \emph{Langlands correspondence and its variants} under David Kazhdan.

\begin{note}
 For a field $F$, denote its absolute Galois group by $G_F$.  As above, we let $G_v$ denote a decomposition group at a place $v$ of $F$ and fix embeddings $G_v \hookrightarrow G_F$.  Let $F_v$ denote the $v$-adic completion of $F$.  We use the phrase ``almost all'' as a substitute for ``all but finitely many.''
\end{note}

\section{Review of Galois deformation theory}

We briefly recall the theory of deformations of mod $\ell$ Galois representations in the sense of Mazur.  For a more thorough introduction see \cite{Bockle} or \cite{Gouvea}.  

Let $F$ be a number field and let $S$ be a finite set of places of $F$.  Let $k$ be a finite field of characteristic $\ell$ and denote the Witt vectors of $k$ by $W(k)$.  Consider an absolutely irreducible continuous representation $$\bar{\rho}: G_{F,S} \rightarrow \GL_n(k).$$  Also consider the category $\mathcal{C}$ of complete local noetherian rings $R$ with residue field $k$.  Morphisms in this category are local homomorphisms that induce the identity on $k$.  A \emph{lift} of $\bar{\rho}$ to $R$ is a continuous representation $\rho: G_{F,S} \rightarrow \GL_n(R)$ making the following diagram commute:\\
\hspace*{\fill}\xymatrix{
  G_{F,S} \ar[r]^\rho \ar[rd]_{\bar{\rho}} & \GL_n(R) \ar[d]\\
  				 & \GL_n(k)
   }\hspace{\fill}
\\ where the homomorphism $\GL_n(R) \rightarrow \GL_n(k)$ is the map induced by the reduction homomorphism $R \rightarrow k$.  We say that two lifts $\rho$ and $\rho'$ of $\bar{\rho}$ to $R$ are \emph{strictly equivalent} if $\gamma\rho\gamma^{-1} = \rho'$ for some $\gamma \in \Ker(\GL_n(R) \rightarrow \GL_n(k))$.

\begin{defn}
A \emph{deformation} of $\bar{\rho}$ to $R$ is a strict equivalence class of lifts of $\bar{\rho}$ to $R$.
\end{defn}

Consider the functor $D_{\bar{\rho}}: \mathcal{C} \rightarrow SETS$ given by $$D_{\bar{\rho}}(R) = \{\mbox{deformations of $\bar{\rho}$ to $R$}\}.$$  Call such a functor a \emph{deformation problem}.

\begin{thm}[Mazur]\label{funddefthm}
If $\bar{\rho}$ is absolutely irreducible then $D_{\bar{\rho}} = \Hom(R_{\bar{\rho}}, -)$ and $$R_{\bar{\rho}} \cong W(k)[[x_1,\dots, x_{d_1}]]/I.$$  Here $d_i = \dim_kH^i(G_{F,S}, \ad \bar{\rho})$ and $I$ is generated by at most $d_2$ elements.
\end{thm}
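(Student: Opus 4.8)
The plan is to prove this in three steps, following the standard treatment of the deformation theory of Galois representations (see \cite{Bockle} or \cite{Gouvea}): first establish representability by checking Schlessinger's criterion, then compute the tangent space to obtain the number of generators, and finally use obstruction theory to bound the number of relations.

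For representability, I would verify Schlessinger's conditions $(H1)$--$(H4)$ for the functor $D_{\bar\rho}$. Conditions $(H1)$ and $(H2)$, which concern the behaviour of $D_{\bar\rho}$ on fibre products $R' \times_R R''$ along small surjections, hold for the functor of \emph{lifts} of an arbitrary residual representation, essentially because giving a lift to a fibre product is the same as giving a compatible pair of lifts. The role of absolute irreducibility enters when one passes to strict equivalence classes: by Schur's lemma, $\End_{G_{F,S}}(\bar\rho) = k$, which rigidifies the action of $\Ker(\GL_n(R) \to \GL_n(k))$ on lifts enough to upgrade $(H2)$ to the uniqueness statement $(H4)$. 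Condition $(H3)$ --- finite-dimensionality of the tangent space $t_{D_{\bar\rho}} = D_{\bar\rho}(k[\epsilon]/(\epsilon^2))$ --- follows from the identification $t_{D_{\bar\rho}} \cong H^1(G_{F,S}, \ad\bar\rho)$ together with the finiteness of the Galois cohomology of $G_{F,S}$ with coefficients in a finite module (which holds since $S$ is finite and, in the cases of interest, contains the places above $\ell$). Schlessinger's theorem then yields a complete local noetherian $W(k)$-algebra $R_{\bar\rho}$ with residue field $k$ pro-representing $D_{\bar\rho}$, i.e.\ $D_{\bar\rho} = \Hom(R_{\bar\rho}, -)$.

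Next I would make the tangent space computation explicit: a lift of $\bar\rho$ to $k[\epsilon]/(\epsilon^2)$ has the form $g \mapsto (\id + \epsilon\, c(g))\bar\rho(g)$, and the condition that this be a homomorphism is exactly that $c$ is a $1$-cocycle for $G_{F,S}$ acting on $\ad\bar\rho$ through $\bar\rho$ by conjugation, with strictly equivalent lifts corresponding to cohomologous cocycles. Hence $t_{D_{\bar\rho}} \cong H^1(G_{F,S},\ad\bar\rho)$ has dimension $d_1$. Since $R_{\bar\rho}$ is complete local noetherian with residue field $k$, the $k$-dual of its relative cotangent space $\mathfrak{m}_{R_{\bar\rho}}/(\mathfrak{m}_{R_{\bar\rho}}^2 + \ell R_{\bar\rho})$ is this tangent space, so by the topological Nakayama lemma there is a surjection $A := W(k)[[x_1,\dots,x_{d_1}]] \twoheadrightarrow R_{\bar\rho}$; writing $J$ for its kernel and $\mathfrak{m}$ for the maximal ideal of $A$, the minimality of the chosen generators gives $J \subseteq \mathfrak{m}^2$ (here one must pay the usual attention to the role of $\ell$).

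It remains to bound the number of generators of $J$ by $d_2 = \dim_k H^2(G_{F,S},\ad\bar\rho)$; this is the technical heart of the argument and the step I expect to cause the most trouble. The idea is to produce an injection $(J/\mathfrak{m}J)^\vee \hookrightarrow H^2(G_{F,S},\ad\bar\rho)$ by obstruction theory. Given a nonzero linear functional $\phi\colon J/\mathfrak{m}J \to k$, one forms the small extension $0 \to k \to A_\phi \to R_{\bar\rho} \to 0$ with $A_\phi$ the quotient of $A$ by the preimage in $J$ of $\Ker\phi$; the universal deformation over $R_{\bar\rho}$ then lifts to $A_\phi$ if and only if an obstruction class $o(\phi) \in H^2(G_{F,S},\ad\bar\rho)$ vanishes, and one checks that $\phi \mapsto o(\phi)$ is $k$-linear. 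For injectivity: if $o(\phi) = 0$ then the universal deformation lifts to $A_\phi$, so by the universal property there is a section $R_{\bar\rho} \to A_\phi$ of $A_\phi \to R_{\bar\rho}$ (using that an endomorphism of $R_{\bar\rho}$ inducing the identity on the cotangent space is an automorphism); this splits the extension, and combined with $J \subseteq \mathfrak{m}^2$ it forces $\Ker\phi = J$, i.e.\ $\phi = 0$. Therefore $\dim_k J/\mathfrak{m}J \le d_2$, and Nakayama shows $J$ is generated by at most $d_2$ elements, completing the proof. Alternatively, the whole presentation can be read off from the explicit successive-approximation construction of the universal deformation ring, at each stage of which the tangent and obstruction data are governed by $H^1$ and $H^2$ respectively.
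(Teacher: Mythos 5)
The paper gives no proof of this theorem---it is quoted as Mazur's result, with \cite{Bockle} and \cite{Gouvea} cited for background---and your sketch is precisely the standard argument found in those sources (Schlessinger's criterion with Schur's lemma supplying $(H4)$, the cocycle description of the tangent space, and the obstruction-theoretic injection $(J/\mathfrak{m}J)^\vee \hookrightarrow H^2(G_{F,S},\ad\bar\rho)$), so it is essentially the same approach and is correct. The one point needing the care you already flag is that minimality of the presentation only gives $J \subseteq \mathfrak{m}_A^2 + \ell A$ rather than $J \subseteq \mathfrak{m}_A^2$, so the splitting argument should be run with the cotangent space taken modulo $\ell$, i.e.\ $\mathfrak{m}/(\mathfrak{m}^2 + \ell)$; with that adjustment the contradiction with a nonzero $\phi$ goes through verbatim.
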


\begin{defn}
The deformation problem $D_{\bar{\rho}}$ is \emph{unobstructed} if $d_2 = 0$.
\end{defn}

We can also consider subfunctors of $D_{\bar{\rho}}$ where we ask our deformations to satisfy certain prescribed properties.  For example, we can ask for deformations with fixed determinant.  By this we mean that $\det \rho$ is the composition of the canonical homomorphism $W(k) \rightarrow R$ (making $R$ a $W(k)$-algebra) with a fixed continuous character $\delta:G_{F,S} \rightarrow W(k)$.  When this occurs, we say that a deformation $\rho$ has $\det = \delta$.  Denote by $D_{\bar{\rho}}^{\det =\delta}$ the subfunctor given by $$D_{\bar{\rho}}^{\det=\delta} = \{\mbox{deformations of $\bar{\rho}$ to $R$ with $\det = \delta$}\}.$$  Note that for the deformation problem $D_{\bar{\rho}}^{\det=\delta}$, an analogue to Theorem \ref{funddefthm} holds where we replace $\ad\bar{\rho}$ by $\ad^0\bar{\rho}$ in the statement of the theorem.

We now specialize to two-dimensional residual representations $\bar{\rho}:G_{F,S} \rightarrow \GL_2(k)$.  Let $K$ be a finite extension of $\QQ_\ell$ and let $\OO$ be its ring of integers.  Assume that we have a (fixed) continuous representation $$\rho: G_{F,S} \rightarrow \GL_2(\OO)$$ lifting $\bar{\rho}$.  Set $V_\rho = K^3$ and $A_\rho = (K/\OO)^3$.  Give $V_\rho$ and $A_\rho$ a $G_F$-action via $\ad^0\rho$.  Let $V_\rho(1)$ denote the Tate-twist of $V_\rho$.  Finally, define the Selmer groups $H_f^1(G_F,V_\rho(1))$ and $H_f^1(G_V, A_\rho)$ in the sense of Bloch-Kato \cite{Bloch-Kato}.  Then we have the following criterion for unobstructedness.

\begin{prop}\label{unobscriterion}
Suppose 
\begin{itemize}
\item[$(1)$] $H^0(G_v, \ol{\varepsilon}\otimes\ad^0\ol{\rho}) = 0$ for all $v \in S$,

\item[$(2)$] $H_f^1(G_F, V_\rho(1)) = 0$,

\item[$(3)$] $H_f^1(G_F,A_\rho) = 0$.
\end{itemize}
Then $H^2(G_{F,S}, \ad^0\ol{\rho})=0$.  That is, $\DD_{\ol{\rho}}^{\det = \delta}$ is unobstructed.
\end{prop}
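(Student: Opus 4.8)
The plan is to run Weston's argument for $F=\QQ$ from \cite{Weston2} essentially verbatim; passing to a totally real $F$ of degree $d$ changes nothing substantive, because for $\ell$ odd the (now $d$-fold) archimedean places of $F$ carry no Galois cohomology. The argument has two halves: (i) convert the vanishing of $H^2(G_{F,S},\ad^0\ol{\rho})$ into the vanishing of a global strict Selmer group, using hypothesis $(1)$ and Poitou--Tate duality; and (ii) bound that Selmer group by the Bloch--Kato Selmer groups appearing in $(2)$ and $(3)$.

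For (i): since $\ell$ is odd, the trace form identifies $\ad^0\ol{\rho}$ with its own dual, so its Cartier dual is $\Hom(\ad^0\ol{\rho},\mu_\ell)\cong\ol{\varepsilon}\otimes\ad^0\ol{\rho}=:W$. Local Tate duality gives $H^2(G_v,\ad^0\ol{\rho})\cong H^0(G_v,W)^{\vee}$, which vanishes for every finite $v\in S$ by $(1)$ and for every archimedean $v$ because $\ell$ is odd. Feeding this into the nine-term Poitou--Tate sequence for $\ad^0\ol{\rho}$, the map $H^1(G_{F,S},W)^{\vee}\to H^2(G_{F,S},\ad^0\ol{\rho})$ becomes surjective, with kernel the exact annihilator of
\[
\Sha^1_S(G_{F,S},W):=\Ker\!\Bigl(H^1(G_{F,S},W)\longrightarrow \textstyle\prod_{v\in S}H^1(G_v,W)\Bigr),
\]
so that $H^2(G_{F,S},\ad^0\ol{\rho})\cong\Sha^1_S(G_{F,S},W)^{\vee}$. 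It remains to prove $\Sha^1_S(G_{F,S},W)=0$.

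For (ii): note $W\cong A_\rho(1)[\ell]$, where $A_\rho(1)$ is the Tate twist of $A_\rho$. Hypothesis $(1)$ forces $H^0(G_{F,S},W)=0$ (a global invariant injects into a local one), whence $H^0(G_{F,S},A_\rho(1))=0$ since it is a torsion subgroup of $A_\rho(1)$ with trivial $\ell$-torsion; consequently $H^1(G_{F,S},W)\hookrightarrow H^1(G_{F,S},A_\rho(1))$ and $\Sha^1_S(G_{F,S},W)$ injects into $\Sha^1(G_{F,S},A_\rho(1))$. The latter lies in the Bloch--Kato Selmer group $H^1_f(G_F,A_\rho(1))$, whose maximal divisible subgroup is the image of $H^1_f(G_F,V_\rho(1))=0$ by $(2)$; hence $H^1_f(G_F,A_\rho(1))$ is finite and coincides with its Bloch--Kato $\Sha$-group. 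Finally the Bloch--Kato formula comparing the $\Sha$-groups of the two discrete modules $A_\rho(1)$ and $A_\rho$ (which are dual in the Bloch--Kato sense, $A_\rho$ being the discrete module attached to the lattice $\ad^0\rho$ in $V_\rho(1)^{*}(1)=V_\rho$) expresses $\#\Sha(A_\rho(1))$ as $\#\Sha(A_\rho)$ times a product of local correction terms; $(3)$ makes $\Sha(A_\rho)$ trivial, and $(1)$ — killing $H^0(G_v,A_\rho(1))$ for $v\nmid\ell$ and the relevant crystalline factor for $v\mid\ell$ — makes the correction terms trivial. Therefore $H^1_f(G_F,A_\rho(1))=0$, hence $\Sha^1_S(G_{F,S},W)=0$ and $H^2(G_{F,S},\ad^0\ol{\rho})=0$; that is, $\DD^{\det=\delta}_{\ol{\rho}}$ is unobstructed.

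The formal skeleton — the Poitou--Tate conversion of (i), and the bookkeeping observation that $(1)$ annihilates the local $H^2$'s, the twisted $H^0$'s, and the local correction factors — is routine. The step I expect to require genuine care is the comparison of the torsion Selmer groups in (ii): pinning down exactly how the finite Bloch--Kato $\Sha$-groups of $W$, $A_\rho(1)$, and $A_\rho$ are related through the $\ell$-power-torsion sequences and the Tate twist, and verifying that $(2)$ and $(3)$ \emph{together} — not either in isolation — force everything to vanish. This is precisely the computation Weston carries out over $\QQ$; in the Hilbert setting one simply tracks $d$ archimedean places in place of one, and these behave uniformly once $\ell$ is odd.
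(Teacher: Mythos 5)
Your proposal is correct and takes essentially the same route as the paper, whose proof simply defers to Weston's Proposition 2.2: Poitou--Tate duality together with hypothesis $(1)$ reduces the claim to the vanishing of the strict Selmer group of $\ol{\varepsilon}\otimes\ad^0\ol{\rho}$, which is embedded in $H^1_f(G_F, A_\rho(1))$ and then killed by $(2)$ (the divisible part) and $(3)$ (the finite part, via the generalized Cassels--Tate/Flach duality between $\Sha(A_\rho(1))$ and $\Sha(A_\rho)$). The only cosmetic difference is your final step: since the Bloch--Kato local conditions for $V_\rho$ and $V_\rho(1)$ are exact annihilators under local Tate duality, Flach's pairing gives $\Sha(A_\rho(1))\cong\Sha(A_\rho)^{\vee}$ outright, so no local correction terms need to be checked there.
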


\begin{proof}
The argument follows mutatis mutandis as in the proof of Proposition 2.2 in \cite{Weston2}.
\end{proof}

Thus the strategy for proving Theorem \ref{mainthm} is clear.  For $\rho = \rho_{f,\lambda}$, we need to check that the hypotheses of Proposition \ref{unobscriterion} hold for almost all primes $\lambda$ of $K_f$.

\section{Local invariants for $\ell \neq p$}

Let $v$ be a prime over a rational $p \in \ZZ$.  In this section, we show that the local invariants $H^0(G_v, \ol{\varepsilon}\otimes\ad^0\ol{\rho})$ are zero for almost all $\lambda$ not dividing $p$.  We  separate the proof into two cases based on the local Langlands correspondence for $\GL_2(F_v)$.

Let $K$ be any number field with ring of integers $\OO$.  For all primes $\lambda$ of $\OO$ not dividing $p = v \cap \ZZ$, fix an isomorphism $\iota_\lambda\cln \CC \rightarrow \ol{K}_\lambda$ extending the inclusion $\OO \hookrightarrow \OO_\lambda$.

Let $L$ be a finite extension of $\QQ_p$.  We say that a continuous character $\chi\cln L \rightarrow \CC$ is of \emph{Galois-type} with respect to $\iota_\lambda$ if the character $\iota_\lambda\circ\chi$ extends to a continuous character $\chi_\lambda\cln G_L \rightarrow \ol{K}_\lambda$ via the dense embedding $L^\times \hookrightarrow G_L^\ab$ of local class field theory.  Call $\chi$ \emph{arithmetic} if $\chi(F^\times) \subset \ol{\QQ}^\times$.

Let $\pi$ be an irreducible admissible complex representation of $\GL_2(F_v)$.  Call $\pi$ arithmetic if it satisfies one of the following conditions:
\begin{itemize}
\item $\pi$ is a subquotient of an induced representation $\pi(\chi_1,\chi_2)$ where the $\chi_i\cln F_v^\times \rightarrow \CC^\times$ are arithmetic characters (i.e., $\pi$ is principal series or special, coming from arithmetic characters),

\item $\pi$ is the base change of an arithmetic quadratic character $\chi\cln L^\times \rightarrow \CC^\times$ where $L/F_v$ is a quadratic extension (i.e., $\pi$ is supercuspidal and comes from the base change of an arithmetic character),

\item $\pi$ is extraordinary.
\end{itemize}

\begin{lem}\label{princeseries}
Let $\pi$ be an arithmetic irreducible admissible complex representation of $\GL_2(F_v)$.  Let $\{\rho_\lambda\cln G_v \rightarrow \GL_2(\ol{K}_\lambda)\}$ be a family of continuous representations for $\lambda$ not dividing $p$ such that $\pi$ and $\rho_\lambda$ are in Langlands correspondence with respect to $\iota_\lambda$ for all $\lambda$.  If $\pi$ is principal series or supercuspidal then $$H^0(G_v, \ol{\varepsilon}\otimes\ad^0\ol{\rho}_\lambda^\s) = 0$$ for almost all $\lambda$.
\end{lem}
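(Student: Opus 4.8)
The plan is to reduce the statement to its characteristic-zero analogue, $H^0(G_v,\varepsilon\otimes\ad^0\rho_\lambda)=0$, where $\varepsilon$ denotes the $\ell$-adic cyclotomic character. Since $v\nmid\ell$, the character $\ol\varepsilon$ is unramified on $G_v$, so $H^0(G_v,\ol\varepsilon\otimes\ad^0\ol\rho_\lambda^\s)$ is the eigenspace of geometric Frobenius acting on the inertia invariants $(\ad^0\ol\rho_\lambda^\s)^{I_v}$ with eigenvalue $\ol\varepsilon(\Frob_v)^{-1}$, where $I_v\subset G_v$ is the inertia subgroup. Because $\pi$ is principal series or supercuspidal---in particular not special---its Weil--Deligne parameter has trivial monodromy, so $\rho_\lambda(I_v)$ is finite, and, that parameter being independent of $\lambda$ via $\iota_\lambda$, of order $N$ independent of $\lambda$. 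Hence for $\ell\nmid 2N$ the formation of $I_v$-invariants commutes with reduction modulo $\lambda$ and with $\ad^0$, and the eigenvalues of $\Frob_v$ on $(\ad^0\rho_\lambda)^{I_v}$ are the $\iota_\lambda$-images of a fixed finite set of algebraic numbers (roots of unity when $\pi$ is extraordinary, since $\ad^0$ annihilates the possibly transcendental central character). Thus if $H^0(G_v,\ol\varepsilon\otimes\ad^0\ol\rho_\lambda^\s)\ne 0$ for infinitely many $\lambda$, one of these fixed algebraic numbers $\alpha$ satisfies $\alpha\equiv\varepsilon(\Frob_v)^{-1}\pmod{\lambda}$ for infinitely many $\lambda$, hence $\alpha=\varepsilon(\Frob_v)^{-1}$---which says precisely that $H^0(G_v,\varepsilon\otimes\ad^0\rho_\lambda)\ne 0$. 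So, after discarding the finitely many $\ell$ dividing $2N$ or with $\ol\varepsilon|_{G_v}$ trivial, it is enough to establish the characteristic-zero vanishing.

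For that I would use that $\pi$ and $\rho_\lambda$ correspond under $\iota_\lambda$, which pins down the shape of $\rho_\lambda$, and show in each of the three arithmetic cases that $\varepsilon^{-1}|_{G_v}$ is not a subrepresentation of $\ad^0\rho_\lambda$ (equivalently, not a Jordan--H\"older constituent, as $\ad^0\rho_\lambda$ is semisimple in characteristic zero). If $\pi$ is principal series, then $\rho_\lambda\cong\chi_{1,\lambda}\oplus\chi_{2,\lambda}$ with $\chi_{i,\lambda}$ the Galois-type character attached to $\chi_i$, so $\ad^0\rho_\lambda\cong\id\oplus\mu_\lambda\oplus\mu_\lambda^{-1}$ with $\mu_\lambda$ attached to $\chi_1\chi_2^{-1}$; here $\varepsilon^{-1}\ne\id$ since $\varepsilon$ has infinite order on $G_v$, while $\varepsilon^{-1}=\mu_\lambda^{\pm1}$ would mean, via local class field theory, that $\chi_1\chi_2^{-1}$ is $x\mapsto|x|^{\mp1}$---forcing $\pi(\chi_1,\chi_2)$ to be reducible, contrary to hypothesis. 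If $\pi$ is supercuspidal dihedral, then $\rho_\lambda\cong\Ind_{G_L}^{G_v}\chi_\lambda$ for a quadratic extension $L/F_v$ and $\chi_\lambda$ the Galois-type character attached to $\chi$; restricting to $G_L$, where $\rho_\lambda$ becomes $\chi_\lambda\oplus\chi_\lambda^\sigma$, gives $\ad^0\rho_\lambda|_{G_L}\cong\id\oplus\theta_\lambda\oplus\theta_\lambda^{-1}$ with $\theta_\lambda=\chi_\lambda/\chi_\lambda^\sigma$ and $\sigma$ a generator of $\Gal(L/F_v)$; since $\varepsilon^{-1}|_{G_L}$ is $\sigma$-invariant (being the restriction of a character of $G_v$) whereas $\theta_\lambda^\sigma=\theta_\lambda^{-1}$, an occurrence of $\varepsilon^{-1}|_{G_v}$ in $\ad^0\rho_\lambda$ would force $\theta_\lambda^2=\id$ and then $\varepsilon^{-2}|_{G_L}=\id$, which is absurd as $\varepsilon$ has infinite order on $G_L$ too. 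If $\pi$ is extraordinary, the projective image of $\rho_\lambda$ is a fixed finite group $P\in\{A_4,S_4,A_5\}$ and $\ad^0\rho_\lambda$, factoring through $P$, is absolutely irreducible of dimension three, hence has no one-dimensional subrepresentation at all. In all three cases $H^0(G_v,\varepsilon\otimes\ad^0\rho_\lambda)=0$, which completes the argument.

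I expect the main obstacle to be the uniformity in the first paragraph rather than the bookkeeping in the second: one must genuinely use that $\pi$ is not special to bound $|\rho_\lambda(I_v)|$ independently of $\lambda$, and one must justify that the Frobenius eigenvalues on $(\ad^0\rho_\lambda)^{I_v}$ really do form a $\lambda$-independent set of algebraic numbers---this is where the hypotheses that $\pi$ is arithmetic and not special are used. It is also worth noting that the principal-series case is closed precisely by the irreducibility of $\pi$, and that for special $\pi$ the statement would fail: there $\ad^0\ol\rho_\lambda^\s\cong\id\oplus\ol\varepsilon\oplus\ol\varepsilon^{-1}$, so $H^0(G_v,\ol\varepsilon\otimes\ad^0\ol\rho_\lambda^\s)$ contains $H^0(G_v,\id)\ne 0$---which explains why ``special'' is absent from the hypotheses. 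Within the second paragraph the only delicate points are the computation of $\ad^0$ after restriction to $G_L$ in the dihedral case and the resulting $\sigma$-parity clash; everything else is routine.
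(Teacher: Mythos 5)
Your argument is correct and is essentially the paper's own route: the paper proves this lemma by citing Weston's Proposition 3.2, and your case analysis (principal series, dihedral supercuspidal, extraordinary) together with the reduction of the mod-$\lambda$ nonvanishing to a $\lambda$-independent identity among algebraic Frobenius eigenvalues, ruled out in characteristic zero by irreducibility of $\pi$ and the infinite order of $\varepsilon$, is exactly that argument. The points you flag (finiteness of the inertia image because $\pi$ is not special, arithmeticity giving $\lambda$-independent algebraic data, and the failure of the statement for special $\pi$) are the same ones underlying the cited proof, so no substantive difference or gap.
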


\begin{proof}
This follows precisely as in \cite[Proposition 3.2]{Weston2}, so we do not repeat the argument here.
\end{proof}

\begin{cor}\label{princeseriescor}
We have $H^0(G_v, \ol{\varepsilon}\otimes\ad^0\ol{\rho}_\lambda) = 0$ for almost all $\lambda$.
\end{cor}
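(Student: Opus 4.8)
The plan is to derive the corollary from Lemma \ref{princeseries}, which handles the principal series and supercuspidal cases but (i) is phrased for the semisimplification $\ol{\rho}_\lambda^{\s}$ rather than for $\ol{\rho}_\lambda$ itself and (ii) says nothing about the special case. Two additions are therefore needed: a comparison between $\ol{\rho}_\lambda$ and its semisimplification, and a direct treatment of special $\pi_v$. Here $\ol{\rho}_\lambda = \ol{\rho}_{f,\lambda}|_{G_v}$ and $\pi_v$ is the local component at $v$ of the automorphic representation attached to $f$; it is an arithmetic, irreducible, admissible, infinite-dimensional representation of $\GL_2(F_v)$, hence principal series, special, or supercuspidal.

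First I would record the semisimplification comparison. For any two-dimensional continuous $G_v$-representation $\ol{\sigma}$ over $k_{f,\lambda}$, the representations $\ol{\sigma}$ and $\ol{\sigma}^{\s}$ have the same Jordan--H\"older constituents as $G_v$-modules; since tensor product over $k_{f,\lambda}$ is exact, so do $\ol{\sigma}\otimes\ol{\sigma}^\vee$ and $\ol{\sigma}^{\s}\otimes(\ol{\sigma}^{\s})^\vee$, and, removing the trivial constituent cut out by the ($G_v$-equivariant) trace map, so do $\ad^0\ol{\sigma}$ and $\ad^0\ol{\sigma}^{\s}$. As the dimension of the space of $G_v$-invariants in a module is at most the sum of the dimensions of the invariants of its constituents, it follows that
\[
\dim_{k_{f,\lambda}} H^0(G_v,\ol{\varepsilon}\otimes\ad^0\ol{\rho}_\lambda)\le\dim_{k_{f,\lambda}} H^0(G_v,\ol{\varepsilon}\otimes\ad^0\ol{\rho}_\lambda^{\s}),
\]
and when $\pi_v$ is principal series or supercuspidal the right-hand side is zero for almost all $\lambda$ by Lemma \ref{princeseries}.

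It remains to treat the case that $\pi_v$ is special. Then $\pi_v$ is a twist of the Steinberg representation and $\ol{\rho}_\lambda$ sits in an exact sequence of $G_v$-modules $0\to\ol{\varepsilon}\,\ol{\chi}\to\ol{\rho}_\lambda\to\ol{\chi}\to0$ for some character $\ol{\chi}$ of $G_v$; crucially the sub-line is the $\ol{\varepsilon}$-twist (the standard ``Tate curve'' normalization of the local representation attached to a special $\pi_v$), and --- exactly as in the classical case --- this extension stays non-split modulo $\lambda$ for all but finitely many $\lambda$ (its restriction to $I_v$ is $n_v$ times the mod $\ell$ tame character for a fixed positive integer $n_v$, hence nonzero once $\ell\nmid n_v$). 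Since $H^0(G_v,\ol{\varepsilon}\otimes\ad^0\ol{\rho}_\lambda)\hookrightarrow H^0(G_v,\ol{\varepsilon}\otimes\ad\ol{\rho}_\lambda)=\Hom_{G_v}(\ol{\rho}_\lambda,\ol{\varepsilon}\otimes\ol{\rho}_\lambda)$, it is enough to show the last group vanishes for almost all $\lambda$. For $\ell\nmid\mathrm{Nm}(v)^2-1$ (where $\mathrm{Nm}(v)$ is the residue cardinality at $v$) the three characters $\ol{\chi},\ \ol{\varepsilon}\,\ol{\chi},\ \ol{\varepsilon}^2\ol{\chi}$ are pairwise distinct, since $\ol{\varepsilon}|_{G_v}$ is unramified with $\ol{\varepsilon}(\Frob_v)=\mathrm{Nm}(v)$; then the Jordan--H\"older constituents of $\ol{\rho}_\lambda$ are $\ol{\varepsilon}\,\ol{\chi}$ (a submodule) and $\ol{\chi}$ (a quotient), those of $\ol{\varepsilon}\otimes\ol{\rho}_\lambda$ are $\ol{\varepsilon}^2\ol{\chi}$ (a submodule) and $\ol{\varepsilon}\,\ol{\chi}$ (a quotient), and the unique common constituent $\ol{\varepsilon}\,\ol{\chi}$ occurs as a submodule of $\ol{\rho}_\lambda$ but only as a quotient of $\ol{\varepsilon}\otimes\ol{\rho}_\lambda$. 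The image of any $G_v$-homomorphism $\ol{\rho}_\lambda\to\ol{\varepsilon}\otimes\ol{\rho}_\lambda$ is at once a quotient of $\ol{\rho}_\lambda$ and a submodule of the non-split module $\ol{\varepsilon}\otimes\ol{\rho}_\lambda$; the submodules of the latter are $0$, the line $\ol{\varepsilon}^2\ol{\chi}$, and the whole, and none of the nonzero ones is a quotient of $\ol{\rho}_\lambda$. So every such homomorphism vanishes, finishing this case and the corollary.

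The point requiring the most care is the direction of the extension in the special case: were the sub-line of $\ol{\rho}_\lambda$ the unramified character rather than its $\ol{\varepsilon}$-twist, the common constituent $\ol{\varepsilon}\,\ol{\chi}$ would be a quotient of $\ol{\rho}_\lambda$ and a submodule of $\ol{\varepsilon}\otimes\ol{\rho}_\lambda$, giving a nonzero $G_v$-map and hence a nonzero class in $H^0(G_v,\ol{\varepsilon}\otimes\ad^0\ol{\rho}_\lambda)$ for every $\lambda$ --- so the statement genuinely depends on the standard normalization of the Galois representation attached to a special representation. The other ingredients --- that $\pi_v$ is arithmetic and infinite-dimensional, and that the Steinberg extension remains non-split for almost all $\lambda$ --- are standard, and I would simply cite them.
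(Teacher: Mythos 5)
Your first paragraph reproduces the paper's proof of this corollary: in the paper the corollary is read under the hypotheses of Lemma \ref{princeseries} (so $\pi_v$ principal series or supercuspidal), and the entire argument is the inequality $\dim H^0(G_v,\ol{\varepsilon}\otimes\ad^0\ol{\rho}_\lambda)\le\dim H^0(G_v,\ol{\varepsilon}\otimes\ad^0\ol{\rho}_\lambda^{\s})$ together with the lemma. The special case is deliberately \emph{not} part of this corollary: the paper remarks immediately afterwards that for special (or one-dimensional) $\pi_v$ the semisimplified invariants are one-dimensional for almost all $\lambda$, and it devotes a separate statement, Proposition \ref{specialcase}, to that case.

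The extra treatment of the special case in your proposal has a genuine gap at its key step. You assert that the extension $0\to\ol{\varepsilon}\,\ol{\chi}\to\ol{\rho}_\lambda\to\ol{\chi}\to 0$ stays non-split for almost all $\lambda$ because ``its restriction to $I_v$ is $n_v$ times the mod $\ell$ tame character for a fixed positive integer $n_v$,'' and you propose to cite this as standard. That is the Tate-curve statement, valid for weight-two forms attached to elliptic curves, where $n_v$ is the valuation of the Tate parameter; for a general Hilbert newform of higher weight and nontrivial coefficient field there is no such fixed integer. The extension class is an element of $H^1(G_v,\OO_\lambda(1))\cong F_v^\times\hat{\otimes}\OO_\lambda$ which depends on $\lambda$, and its divisibility by $\lambda$ (equivalently, splitting, equivalently unramifiedness of the twisted reduction) is exactly the level-lowering phenomenon. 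Ruling this out for almost all $\lambda$ is the actual content of the paper's Proposition \ref{specialcase}, whose proof needs Lemmas \ref{reductionform} and \ref{semisimplevanishing}, Jarvis's level-lowering theorem (applicable since $\lambda\nmid q^2-1$), and a strong multiplicity one argument comparing $f$ with the finitely many newforms of level dividing $\mf{n}'/v$. Your conditional argument --- given non-splitness and $\lambda\nmid q^2-1$, the three characters $\ol{\chi},\ol{\varepsilon}\ol{\chi},\ol{\varepsilon}^2\ol{\chi}$ are distinct and there are no $G_v$-maps $\ol{\rho}_\lambda\to\ol{\varepsilon}\otimes\ol{\rho}_\lambda$ --- is fine and parallels Lemma \ref{semisimplevanishing}, but the non-splitness input cannot simply be cited; it is the hard point, and it is where the paper's level-lowering argument does the work.
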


\begin{proof}
This is clear from Proposition \ref{princeseries} since $$\dim_{\ol{\F}_\ell}H^0(G_v, \ol{\varepsilon}\otimes\ad^0\ol{\rho}_\lambda) \leq \dim_{\ol{\F}_\ell}H^0(G_v, \ol{\varepsilon}\otimes\ad^0\ol{\rho}_\lambda^\s).$$
\end{proof}

Note that for $\rho_\lambda$ as in Lemma \ref{princeseries}, $$\dim_{\ol{F}_\ell}H^0(G_\p, \ol{\varepsilon}\otimes \ad^0\ol{\rho}_\lambda^{\s}) = 1$$ for almost all $\lambda$ when $\pi$ is either one-dimensional or special.  Although the stronger vanishing result fails when $\pi$ is either one-dimensional or special, we can show the sufficient (and desired) vanishing of $H^0(G_v,\bar{\varepsilon}\otimes\ad^0\bar{\rho}_\lambda)$ for almost all $\lambda$ by using a level-lowering argument.

Let $\rho_{f,\lambda}\cln G_{F,S} \rightarrow \GL_2(K_{f,\lambda})$ be the Galois representation attached to a Hilbert newform $f$ of level $\mathfrak{n}$, weight $k$ and character $\psi$ by Carayol \cite{Carayol} and Taylor \cite{Taylor}.  Write $\psi = \psi_f|\cdot|^{k_0-2}$ where $\psi_f$ is a character of finite order and $|\cdot|$ is the norm character.  Note that $\det\rho_{f,\lambda} = \psi_f^{-1}\varepsilon^{1-k_0}$ where here we use the fact that the norm character corresponds to the compatible system of $G_F$-characters $\{\varepsilon_\lambda:=\varepsilon\}_\lambda$ and $\psi_f$ also denotes by abuse of notation the corresponding Galois character.  (We find it more convenient to work with this cohomological normalization rather than the usual normalization.)

Let $\pi$ be the automorphic representation corresponding to $f$.  Write $\pi = \otimes' \pi_v$ for the decomposition of $\pi$ into its irreducible admissible complex representations of $\pi_v$ into $\GL_2F_v$.  Fixing isomorphisms $\iota_\lambda\cln \CC \rightarrow \bar{K}_{f,\lambda}$, Carayol \cite[Th\'{e}or\`{e}me B]{Carayol} showed that each $\pi_v$ is arithmetic and is in Langlands correspondence with $\rho_{f,\lambda}|_{G_v}$ for $\lambda$ not dividing $p$.

\begin{rem}
The irreducible admissible representation $\pi_v$ must be infinite dimensional so nothing is lost by assuming that $\pi_v$ is special (as opposed to one-dimensional) in what follows.
\end{rem}

Suppose that $\pi_v$ is special.  That is, suppose that it is the infinite dimensional quotient of $\pi(\chi|\cdot|,\chi)$ for some arithmetic character $\chi\cln F_v^\times \rightarrow \CC^\times$.  Then the corresponding Galois representation has the form:  $$\rho_{f,\lambda}|_{G_v} \cong \begin{pmatrix} \varepsilon\chi_\lambda & \ast\\ 0 & \chi_\lambda \end{pmatrix}$$ where $\ast$ is nonzero.

\begin{lem}\label{reductionform}
Suppose that $q^2 \not\equiv 1 \bmod \lambda$ where $q = \#(\OO_F/v)$.  Then $$\ol{\rho}_{f,\lambda}|_{G_v} \otimes \bar{k}_{f,\lambda} \cong \begin{pmatrix} \bar{\varepsilon} \bar{\chi}_\lambda & \nu \\ 0 & \ol{\chi}_\lambda \end{pmatrix}$$ and $\nu$ is ramified.
\end{lem}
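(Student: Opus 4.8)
\emph{The plan is to treat the two assertions — the upper‑triangular shape and the ramification of $\nu$ — separately, reducing the ramification to the non‑splitness of $\bar\rho_{f,\lambda}|_{G_v}$, which I would then extract from the Steinberg structure of $\pi_v$.} Throughout one restricts to the cofinitely many $\lambda$ for which $\bar\rho_{f,\lambda}$ is absolutely irreducible, so that it is the honest reduction mod $\lambda$ of $\rho_{f,\lambda}$ and $\bar\rho_{f,\lambda}|_{G_v}$ is genuinely its restriction (a priori non‑semisimple).

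\emph{Shape, and ``ramified $\Leftrightarrow$ nonzero''.} Since $\ast\neq 0$, the line $L$ with $G_v$-action $\varepsilon\chi_\lambda$ is the unique $G_v$-stable line of $\rho_{f,\lambda}|_{G_v}$; choosing a $G_v$-stable $\OO_{f,\lambda}$-lattice $T$, the saturated rank-one sublattice $T' = T\cap L$ and the quotient $T/T'$ carry $G_v$-actions $\varepsilon\chi_\lambda$ and $\chi_\lambda$, and reducing $0\to T'\to T\to T/T'\to 0$ modulo $\lambda$ and tensoring with $\bar k_{f,\lambda}$ (needed only because $\chi_\lambda$, while $\chi_\lambda^2=\psi_f^{-1}\varepsilon^{-k_0}$ is $\OO_{f,\lambda}$-valued, may live over a quadratic extension) puts $\bar\rho_{f,\lambda}|_{G_v}$ into the asserted triangular form, with $\nu$ the resulting extension class in $H^1(G_v,\bar\varepsilon)\otimes\bar k_{f,\lambda}$. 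Next, the hypothesis $q^2\not\equiv 1\bmod\lambda$ (indeed already $q\not\equiv 1$) makes $\bar\varepsilon$ unramified at $v$ with $\bar\varepsilon(\Frob_v)=q\neq 1$, so $H^0(G_v,\bar\varepsilon)=0$ and $H^1(G_v/I_v,\bar\varepsilon)=\bar\varepsilon/(\Frob_v-1)\bar\varepsilon=0$; since $\mathrm{cd}(G_v/I_v)=1$, inflation–restriction gives an injection $H^1(G_v,\bar\varepsilon)\hookrightarrow H^1(I_v,\bar\varepsilon)$, and the local Euler characteristic formula ($v\nmid\ell$) shows this group is one-dimensional. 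Hence any nonzero class is ramified, and it suffices to prove that $\bar\rho_{f,\lambda}|_{G_v}$ is non-split.

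\emph{Non-splitness.} This is the one place ``$\pi_v$ special'' is used. Writing $\pi_v=\mathrm{St}\otimes\chi$, local Langlands together with Carayol's local–global compatibility shows that $\rho_{f,\lambda}|_{G_v}\otimes\chi_\lambda^{-1}$ has non-trivial monodromy, i.e.\ (up to twist) is the $\ell$-adic Tate module of a Tate curve over $F_v$: by Grothendieck's $\ell$-adic monodromy theorem $I_v$ acts through $\sigma\mapsto\exp(t_\ell(\sigma)N)$ with $N\neq 0$ nilpotent and $t_\ell\colon I_v\twoheadrightarrow\ZZ_\ell(1)$, and the extension class in $H^1(G_v,\OO_{f,\lambda}(1))\cong\varprojlim_n F_v^\times/(F_v^\times)^{\ell^n}\otimes_{\ZZ_\ell}\OO_{f,\lambda}$ is, up to a unit, the Kummer class of the Tate parameter, an element of $F_v^\times$ of positive valuation. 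Its image in $H^1(G_v,\bar\varepsilon)\otimes\bar k_{f,\lambda}=\big(F_v^\times/(F_v^\times)^\ell\big)\otimes\bar k_{f,\lambda}$ is nonzero provided that valuation is prime to $\ell$; untwisting by $\bar\chi_\lambda$ gives $\nu\neq 0$, hence, by the previous paragraph, $\nu$ ramified.

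\emph{The main obstacle} is precisely this last step: one must pin down the integral structure tightly enough to know that the lattice underlying $\bar\rho_{f,\lambda}$ realizes the monodromy \emph{primitively} — equivalently, that the Tate parameter has valuation prime to $\ell$ — so that the class does not die modulo $\lambda$. This is the familiar subtlety at Steinberg primes; for the finitely many exceptional $\lambda$ (where $\bar\rho_{f,\lambda}$ is in fact unramified at $v$) the statement is to be understood as holding for almost all $\lambda$, these being absorbed into the ``almost all'' of the main theorem, with the level-lowering argument flagged in the text then handling the local invariant $H^0(G_v,\bar\varepsilon\otimes\ad^0\bar\rho_{f,\lambda})$ directly. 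Everything else — Steps one and two — is routine local Galois cohomology.
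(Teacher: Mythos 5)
Your first two steps establish exactly what the paper's proof of Lemma~\ref{reductionform} actually establishes, but by a genuinely different route. The paper argues cohomologically: since the semisimplification is $\bar\varepsilon\bar\chi_\lambda\oplus\bar\chi_\lambda$, it suffices to rule out a nonsplit extension in the opposite order, and this is done by showing $H^1(G_v,\bar{k}_{f,\lambda}(-1))=0$ when $q^2\not\equiv 1\bmod\lambda$ via inflation--restriction; the ramifiedness assertion is then the (implicit) companion computation that the unramified part of $H^1(G_v,\bar\varepsilon)$ vanishes once $q\not\equiv 1\bmod\lambda$, which you carry out explicitly. You instead intersect a stable lattice with the stable line and reduce the saturated exact sequence, which shows the wrong-order nonsplit reduction is impossible for \emph{any} stable lattice, the hypothesis on $q$ entering only through your correct ``nonzero $\Rightarrow$ ramified'' step. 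Both arguments are sound; yours trades the $H^1(G_v,\bar{k}_{f,\lambda}(-1))$ computation for an integral-structure argument. One wording caveat: to know that the reduction you compute really is $\bar\rho_{f,\lambda}|_{G_v}$ you should take $T$ to be (the restriction of) a $G_{F,S}$-stable lattice rather than an arbitrary $G_v$-stable one, since the absolute-irreducibility remark only makes reductions of global lattices well defined; as every global lattice is $G_v$-stable and your argument applies verbatim, nothing is lost.

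Where you overreach is in trying to prove $\nu\neq 0$. The paper neither proves nor uses this: in Lemma~\ref{semisimplevanishing} the ramifiedness from Lemma~\ref{reductionform} is invoked only under the hypothesis that $\nu$ is nonzero, and the split case $\nu=0$ is a genuine possibility that Proposition~\ref{specialcase} eliminates for almost all $\lambda$ by the global level-lowering argument. So the phrase ``and $\nu$ is ramified'' is to be read as ``any nonzero $\nu$ is automatically ramified,'' which your step two proves. Your sketched local argument for nonsplitness cannot be completed as stated: outside parallel weight $2$ there is no abelian variety and hence no $\lambda$-independent Tate parameter whose valuation one could require to be prime to $\ell$; the ``valuation'' of the integral extension class is an element of $\ZZ_\ell$ depending on $\lambda$, and your assertion that only \emph{finitely many} $\lambda$ are exceptional is precisely what the level-lowering argument of Proposition~\ref{specialcase} supplies globally, not something available by the local reasoning you give. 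If you drop that unproved finiteness claim and keep only your first two steps, your proof of the lemma, in the form in which it is actually used downstream, is complete.
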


\begin{proof}
It is clear that the semi-simplification of $\ol{\rho}_{f,\lambda}|_{G_v} \otimes \bar{k}_{f,\lambda}$ has the form $\ol{\varepsilon}\ol{\chi}_\lambda \oplus \ol{\chi}_\lambda$, so it suffices to show that $\ol{\rho}_{f,\lambda}|_{G_v} \otimes \bar{k}_{f,\lambda}$ is not of the form $$\begin{pmatrix} \ol{\chi}_\lambda & \nu \\ 0 & \ol{\varepsilon}\ol{\chi}_\lambda \end{pmatrix}$$ where $\nu$ is nontrivial.  It is straightforward to check that $\ol{\varepsilon}^{-1}\ol{\chi}_\lambda^{-1}\nu$ is a 1-cocycle in $H^1(G_v, \ol{k}_{f,\lambda}(-1))$.  Consider the inflation-restriction exact sequence $$H^1(G_{\F_q}, \ol{k}_{f,\lambda}(-1)) \rightarrow H^1(G_v, \ol{k}_{f,\lambda}(-1)) \rightarrow H^1(I_v, \ol{k}_{f,\lambda}(-1))^{G_{\F_v}}$$ where $I_v\subset G_v$ is the inertia subgroup.  An easy calculation shows that in general $H^1(G_{\F_q}, \ol{k}_{f,\lambda}(-1)) = 0$ and that $H^1(I_v, \ol{k}_{f,\lambda}(-1))^{G_{\F_v}} = 0$ when $q^2 \not\equiv 1\bmod \lambda$.
\end{proof}

\begin{lem}\label{semisimplevanishing}
Suppose $2(q^2-1)q \not\equiv 0 \bmod \lambda$.  Then $H^0(G_v,\ad^0\ol{\rho}_{f,\lambda}) \neq 0$ if and only if $\ad^0\ol{\rho}_{f,\lambda}|_{G_v}\otimes \ol{k}_{f,\lambda}$ is semi-simple.
\end{lem}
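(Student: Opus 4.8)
The plan is to reduce the statement to a direct matrix computation, using the triangular form provided by Lemma~\ref{reductionform}. First I would unpack the hypothesis: $2(q^2-1)q\not\equiv 0\bmod\lambda$ forces $\ell\neq 2$ and $\ell\nmid q$ (so $\lambda\nmid p$, whence $\ol\varepsilon$ is unramified at $v$ with $\ol\varepsilon(\Frob_v)\equiv q\bmod\lambda$), and it forces $q\not\equiv\pm 1\bmod\lambda$, hence $q^2\not\equiv 1\bmod\lambda$. In particular Lemma~\ref{reductionform} applies, and the characters $\ol\varepsilon|_{G_v}$, $\id$, and $\ol\varepsilon^{-1}|_{G_v}$ of $G_v$ are pairwise distinct. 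Since the formation of $G_v$-invariants commutes with the faithfully flat extension $k_{f,\lambda}\hookrightarrow\ol k_{f,\lambda}$, we have $H^0(G_v,\ad^0\ol\rho_{f,\lambda})\neq 0$ if and only if $M^{G_v}\neq 0$, where $M:=\ad^0\ol\rho_{f,\lambda}|_{G_v}\otimes_{k_{f,\lambda}}\ol k_{f,\lambda}$; and the right-hand side of the lemma is precisely the assertion that $M$ is semisimple. So it suffices to show $M^{G_v}\neq 0\iff M$ semisimple. By Lemma~\ref{reductionform}, $M=\ad^0\rho'$ with $\rho'=\begin{pmatrix}\ol\varepsilon\ol\chi_\lambda&\nu\\0&\ol\chi_\lambda\end{pmatrix}$, and since $\ad^0$ is unchanged by twisting by a character, $M=\ad^0\rho_0$ with $\rho_0=\begin{pmatrix}\ol\varepsilon&c\\0&1\end{pmatrix}$ and $c:=\ol\chi_\lambda^{-1}\nu$ a $1$-cocycle for the action of $G_v$ on $\ol k_{f,\lambda}$ via $\ol\varepsilon$.

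Next I would compute $\ad^0\rho_0$ in the standard basis $X=\begin{pmatrix}0&1\\0&0\end{pmatrix}$, $H=\begin{pmatrix}1&0\\0&-1\end{pmatrix}$, $Y=\begin{pmatrix}0&0\\1&0\end{pmatrix}$ of the trace-zero matrices. A direct calculation shows that $M$ is upper triangular for the flag $\langle X\rangle\subset\langle X,H\rangle\subset M$, with successive quotients $\ol\varepsilon$, $\id$, $\ol\varepsilon^{-1}$, and with $(1,2)$-entry the function $g\mapsto-2c(g)$. Both directions now follow. If $M$ is semisimple, then because its three composition factors are pairwise distinct one-dimensional characters it splits as $M\cong\ol\varepsilon\oplus\id\oplus\ol\varepsilon^{-1}$, so $M^{G_v}\cong\ol k_{f,\lambda}\neq 0$. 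Conversely, suppose $0\neq w=\alpha X+\beta H+\gamma Y\in M^{G_v}$. The bottom row of $\ad^0\rho_0$ forces $\gamma=0$ because $\ol\varepsilon^{-1}$ is nontrivial on $G_v$; the top row then forces $(\ol\varepsilon(g)-1)\alpha=2c(g)\beta$ for all $g\in G_v$. If $\beta=0$ this gives $\alpha=0$ (again because $\ol\varepsilon$ is nontrivial), contradicting $w\neq 0$; hence $\beta\neq 0$, and since $2$ is invertible modulo $\lambda$ we get $c=(\alpha/2\beta)(\ol\varepsilon-1)$, a coboundary. Then $\rho_0$, and therefore $\ol\rho_{f,\lambda}|_{G_v}\otimes\ol k_{f,\lambda}\cong\ol\chi_\lambda\otimes\rho_0$, is a split (diagonal) extension, so $M=\ad^0\rho_0$ is diagonal and in particular semisimple.

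I do not expect a genuine obstacle here: the computation is routine. The one place requiring care is the base-change step, since Lemma~\ref{reductionform} only triangularizes $\ol\rho_{f,\lambda}|_{G_v}$ after extending scalars to $\ol k_{f,\lambda}$, so the argument should be carried out over $\ol k_{f,\lambda}$ and transported back via flatness. It is also worth recording precisely where the full hypothesis is used: the factor $q^2-1$ (that is, $q\not\equiv\pm1\bmod\lambda$) is what makes $\ol\varepsilon$, $\id$, $\ol\varepsilon^{-1}$ distinct on $G_v$---needed both to kill the $Y$-coordinate of an invariant and to conclude that a semisimple $M$ decomposes as a sum of these characters---while the factor $2$ is what converts $(\ol\varepsilon-1)\alpha=2c\beta$ into ``$c$ is a coboundary'', and the factor $q$ only serves to guarantee $\lambda\nmid p$ so that $\ol\varepsilon$ is unramified at $v$.
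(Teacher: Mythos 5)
Your proof is correct, and its core is the same as the paper's: using Lemma \ref{reductionform} you compute $\ad^0$ of the triangularized representation in the standard basis of trace-zero matrices and read off the invariants; the paper's proof is exactly this computation (with the basis ordered $H,X,Y$ instead of your $X,H,Y$, and without the reduction to $\rho_0$, which is harmless since $\ad^0$ is insensitive to twisting). The one genuine difference is in the direction ``$H^0\neq 0\Rightarrow$ semisimple.'' The paper argues the contrapositive by invoking the clause of Lemma \ref{reductionform} that $\nu$ is ramified: a nonzero, hence ramified, $\nu$ against the unramified $\ol\varepsilon$ and $\ol\chi_\lambda$ rules out invariants. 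You instead show that a nonzero invariant forces $\gamma=0$, $\beta\neq 0$, and then $c=\ol\chi_\lambda^{-1}\nu=(\alpha/2\beta)(\ol\varepsilon-1)$, a coboundary, so the extension splits and $\ad^0$ is semisimple. This is slightly more robust: you never use the ramifiedness of $\nu$, and you automatically dispose of the case where $\nu$ is a nonzero coboundary (a case the paper's dichotomy ``$\nu=0$ or $\nu$ ramified'' silently folds into Lemma \ref{reductionform}); the only price is the explicit use of $2\in\ol{k}_{f,\lambda}^\times$, which the hypothesis supplies. One small inaccuracy in your closing commentary: the condition $q\not\equiv -1\bmod\lambda$ is not needed to decompose a semisimple $M$ (a semisimple module is always the direct sum of its Jordan--H\"older factors, whether or not they are distinct); its real role is to make Lemma \ref{reductionform} applicable, as you note at the outset. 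This does not affect the validity of your argument.
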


\begin{proof}
This is a straightforward matrix calculation using Lemma \ref{reductionform}.  For example, choose the basis $\left(\begin{smallmatrix} 1 & 0\\0 & -1\end{smallmatrix}\right), \left(\begin{smallmatrix}0 & 1 \\ 0&0 \end{smallmatrix}\right), \left(\begin{smallmatrix} 0&0 \\1 &0\end{smallmatrix}\right)$ of $\End(V)$ where $V$ is the 3-dimensional $k_{f,\lambda}$-vector space endowed with a $G_{F,S}$ action by $\ad^0\ol{\rho}_{f,\lambda}$.  Then $$\ad^0\ol{\rho}_{f,\lambda}|_{G_v}\otimes \ol{k}_{f,\lambda} \cong \begin{pmatrix} 1 & -2\ol{\chi}_\lambda^{-1}\nu & 0 \\ 0 & \ol{\varepsilon} & 0\\ \ol{\varepsilon}^{-1}\ol{\chi}_\lambda^{-1}\nu & -\ol{\varepsilon}^{-1}\ol{\chi}_\lambda^{-2}\nu^2 & \ol{\varepsilon}^{-1}\end{pmatrix},$$ so it is clear that if $\nu = 0$ then $H^0(G_v, \ad^0\bar\rho_{f,\lambda}) \neq 0$.  Conversely, if $\nu$ is nonzero then using the fact that it is ramified (Lemma \ref{reductionform}) while $\bar{\varepsilon}$ and $\bar{\chi}_\lambda$ are not, one checks that there are no Galois invariants.
\end{proof}

\begin{prop}\label{specialcase}
If $\pi_v$ is special then $$H^0(G_v, \ol{\varepsilon}\otimes\ad^0\ol{\rho}_{f,\lambda}) = 0$$ for almost all $\lambda$.
\end{prop}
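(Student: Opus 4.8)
The plan is to imitate the matrix computations of Lemmas \ref{reductionform} and \ref{semisimplevanishing}, the one new ingredient being the extra twist by $\ol{\varepsilon}$. The underlying point is this: in the semisimplification $\ol{\varepsilon}\ol{\chi}_\lambda\oplus\ol{\chi}_\lambda$ of $\ol{\rho}_{f,\lambda}|_{G_v}$ one does have $\dim H^0(G_v,\ol{\varepsilon}\otimes\ad^0)=1$ for almost all $\lambda$, the line of invariants being spanned (in the basis of Lemma \ref{semisimplevanishing}) by $\left(\begin{smallmatrix}0&0\\1&0\end{smallmatrix}\right)$; but once one remembers that $\nu$ is ramified, that line is no longer $G_v$-fixed and the invariants vanish altogether.

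First I would discard the finitely many primes $\lambda$ with $\lambda\mid p$ or $q^2\equiv1\bmod\lambda$, where $q=\#(\OO_F/v)$; since $q\geq2$ the integer $p(q^2-1)$ is nonzero, so this is a finite set depending only on the fixed place $v$. For the remaining $\lambda$, because $\pi_v$ is special the paragraph preceding Lemma \ref{reductionform} gives $\rho_{f,\lambda}|_{G_v}\cong\left(\begin{smallmatrix}\varepsilon\chi_\lambda&\ast\\0&\chi_\lambda\end{smallmatrix}\right)$ with $\ast\neq0$, and Lemma \ref{reductionform} gives $\ol{\rho}_{f,\lambda}|_{G_v}\otimes\ol{k}_{f,\lambda}\cong\left(\begin{smallmatrix}\ol{\varepsilon}\ol{\chi}_\lambda&\nu\\0&\ol{\chi}_\lambda\end{smallmatrix}\right)$ with $\nu$ ramified. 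Multiplying the $\ad^0$-matrix computed in Lemma \ref{semisimplevanishing} by $\ol{\varepsilon}$, one finds that $\ol{\varepsilon}\otimes\ad^0\ol{\rho}_{f,\lambda}|_{G_v}\otimes\ol{k}_{f,\lambda}$ is represented, in the basis $\left(\begin{smallmatrix}1&0\\0&-1\end{smallmatrix}\right),\left(\begin{smallmatrix}0&1\\0&0\end{smallmatrix}\right),\left(\begin{smallmatrix}0&0\\1&0\end{smallmatrix}\right)$ of $\ad^0$, by a matrix with diagonal $\ol{\varepsilon},\ol{\varepsilon}^2,1$ whose only nonzero off-diagonal entries are of the shapes $\ol{\chi}_\lambda^{-1}\nu$, $\ol{\varepsilon}\,\ol{\chi}_\lambda^{-1}\nu$ and $\ol{\chi}_\lambda^{-2}\nu^2$.

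Next I would solve the linear system expressing that a vector $(x,y,z)$, with coordinates in this basis, is $G_v$-fixed. One of the three scalar equations takes the form $(\ol{\varepsilon}(g)-1)x+\ol{\chi}_\lambda(g)^{-1}\nu(g)z=0$ for all $g\in G_v$. Restricting to the inertia subgroup $I_v$, on which $\ol{\varepsilon}$ is trivial since $v\nmid\ell$, this becomes $\ol{\chi}_\lambda(g)^{-1}\nu(g)z=0$ for all $g\in I_v$; as $\nu$ is ramified, $\nu(g)\neq0$ for some such $g$, while $\ol{\chi}_\lambda(g)^{-1}$ is a unit, so $z=0$. With $z=0$, the same equation over all of $G_v$ gives $(\ol{\varepsilon}(g)-1)x=0$, hence $x=0$ because $\ol{\varepsilon}$ is nontrivial on $G_v$ (as $q^2\not\equiv1\bmod\lambda$ forces $q\not\equiv1\bmod\lambda$). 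Finally, the remaining nontrivial equation, which with $x=z=0$ reduces to $(\ol{\varepsilon}(g)^2-1)y=0$, forces $y=0$ since $\ol{\varepsilon}^2$ is nontrivial on $G_v$ by $q^2\not\equiv1\bmod\lambda$. Therefore $H^0(G_v,\ol{\varepsilon}\otimes\ad^0\ol{\rho}_{f,\lambda}\otimes\ol{k}_{f,\lambda})=0$, and hence $H^0(G_v,\ol{\varepsilon}\otimes\ad^0\ol{\rho}_{f,\lambda})=0$ as well, since the latter injects into the former after extension of scalars to $\ol{k}_{f,\lambda}$. As this holds for all but finitely many $\lambda$, the proposition follows.

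I do not expect a genuine obstacle here: granting Lemmas \ref{reductionform} and \ref{semisimplevanishing}, this is a short and completely explicit computation. The two points that call for a little care are that $\nu$ is a cocycle, not a character, so one argues pointwise --- using only that $\nu$ is not identically zero on $I_v$ --- rather than formally inverting $\nu$; and that one should keep exact track of which congruences modulo $\lambda$ are used, to be sure the exceptional set of $\lambda$ is finite.
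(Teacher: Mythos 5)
Your local computation is fine as far as it goes, but it rests on reading Lemma \ref{reductionform} as asserting that $\nu$ is always ramified (in particular nonzero) once $q^2\not\equiv 1\bmod\lambda$, and that is exactly what is not available. The proof of that lemma only rules out a nonsplit extension in the opposite direction (equivalently: a \emph{nonzero} $\nu$ must be ramified); it does not exclude the split case $\nu=0$. The paper treats $\nu=0$ as a live possibility — Lemma \ref{semisimplevanishing} is an ``if and only if'' precisely because of it, and the remark before Lemma \ref{reductionform} states that the vanishing fails for $\ad^0\bar\rho^{\s}_\lambda$ and must be recovered ``by using a level-lowering argument.'' When $\nu=0$, your own system of equations has the one-dimensional solution space spanned by $\left(\begin{smallmatrix}0&0\\1&0\end{smallmatrix}\right)$, so the twisted invariants do not vanish. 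Showing that $\nu\neq 0$ for almost all $\lambda$ is the actual content of the proposition, and it cannot be extracted from the fixed local data you use: $\nu$ is the reduction of an integral extension class whose divisibility by $\lambda$ varies with $\lambda$, and its vanishing mod $\lambda$ is governed by congruences between $f$ and forms of smaller level — a global phenomenon. (Note also that your argument never uses that $f$ is new at $v$, which is a warning sign.)

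The paper's proof supplies exactly this missing global step. Supposing $H^0(G_v,\ol{\varepsilon}\otimes\ad^0\ol{\rho}_{f,\lambda})\neq 0$ for some $\lambda\nmid 2q(q^2-1)$, Lemma \ref{semisimplevanishing} (in its twisted form) forces the reduction at $v$ to be split; twisting $f$ by a suitable finite-order Hecke character makes the $v$-component unramified special, so $v$ exactly divides the new level $\mathfrak{n}'$ while $\ol{\rho}_{f',\lambda}|_{G_v}$ is unramified; since $N_{F/\QQ}(v)\not\equiv 1\bmod\ell$, Jarvis's level-lowering theorem then yields an eigenform of level $\mathfrak{n}'/v$ congruent to $f'$ modulo $\lambda$. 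As there are only finitely many newforms of level dividing $\mathfrak{n}'/v$, if this occurred for infinitely many $\lambda$ strong multiplicity one would force equality of eigensystems with one of them, contradicting newness at $v$. Hence the nonvanishing can occur for only finitely many $\lambda$. Without this level-lowering argument, your proposal only reproves the $\ol{\varepsilon}$-twisted analogue of Lemma \ref{semisimplevanishing}, not the proposition.
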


\begin{proof}
Note that $\pi_v$ has central character $\chi^2|\cdot|$ where $\chi$ is an arithmetic character giving rise to $\pi_v$.  By the local Langlands correspondence, this yields the equality $\chi^2 = \psi_{f,v}^{-1}|\cdot|^{-k_0}$ where $\psi_{f,v}$ is the $v$-component of $\psi_f$.  Set $\chi_v' = \chi^{-1}|\cdot|^{-k_0/2}$.  Note that $\chi_v'$ has finite order.  Extend $\chi_v'$ to a Hecke character $\chi'$ and twist $f$ by $\chi'$ to get an eigenform $f\otimes\chi'$.  Let $f'$ denote the newform in the eigenspace spanned by $f\otimes \chi'$ and let $\pi'$ denote the corresponding automorphic representation.  Then the $v$-component of $\pi'$ is a subquotient of $\pi(\chi\chi_v'|\cdot|,\chi\chi_v')$.   In particular, $\chi\chi_v'$ is unramified at $v$, so $v$ divides the level $\mathfrak{n}'$ of $f'$ exactly once.

Suppose $\lambda$ does not divide $2q(q^2-1)$ and suppose that \begin{equation}\label{specialnonvanish} H^0(G_v, \ol{\varepsilon}\otimes\ad^0\ol{\rho}_{f,\lambda}) \neq 0.\end{equation}  Then Lemma \ref{semisimplevanishing} implies that $\ol{\rho}_{f,\lambda}|_{G_v} \otimes \ol{k}_{f,\lambda} \cong \ol{\varepsilon}\ol{\chi}_\lambda \oplus \ol{\chi}_\lambda$.  This means that $$\ol{\rho}_{f',\lambda}|_{G_v}\otimes \ol{k}_{f,\lambda} \cong (\ol{\rho}_{f,\lambda} \otimes \ol{\chi'}_\lambda)|_{G_v} \otimes \ol{k}_{f,\lambda} \cong \ol{\varepsilon}^{1-k_0/2}\oplus \bar\varepsilon^{-k_0/2},$$ so $\ol{\rho}_{f',\lambda}|_{G_v}\otimes \ol{k}_{f,\lambda}$ is unramified at $v$.  Since $\lambda$ does not divide $q^2 -1$, we have that $N_{F/\QQ}(v) \not\equiv 1\bmod \ell$, so we may apply \cite[Theorem 0.1]{Jarvis} to get a congruent eigenform $f''$ of level $\mathfrak{n}'/v$.  That is, we get a set of Hecke eigenvalues $\{a(\mf{m}, f'')\}$ such that $a(\mf{q}, f'') \equiv a(\mf{q}, f') \bmod \lambda$ for all $\mf{q}$ not dividing $\mf{n}'\ell$.  By strong multiplicity one, there are only finitely many sets of eigenvalues, each one corresponding to a newform of level dividing $\mathfrak{n}'/v$.  Therefore, if (\ref{specialnonvanish}) holds for infinitely many $\lambda$ then for some newform $g$ of level dividing $\mf{n}'/v$ and for all $\mf{q}$ not dividing $\mf{n}'$, $$a(\mf{q}, g) \equiv a(\mf{q}, f') \mod \lambda$$ for infinitely many $\lambda$.  We conclude that $a(\mf{q},g) = a(\mf{q},f')$ for all $\mf{q}$ not dividing $\mf{n}'$, so applying strong multiplicity one again shows that $g = f$, a contradiction.
\end{proof}

\section{Local invariants for $\ell = p$}

We now recall the theory of Fontaine-Laffaille.  Let $K$ be a finite unramified extension of $\QQ_\ell$ and let $E/\QQ_\ell$ be another finite extension containing $K$.  Let $\sigma$ be the frobenius automorphism on $K$.  Given an $E$-linear representation $V$ of $G_K$, define the finite free $E\otimes_{\QQ_\ell} K$-module $$D_\crys(V) = (B_\crys \otimes_{\QQ_\ell} V)^{G_K}$$  where $B_\crys$ is Fontaine's crystalline period ring.  Note that $D_\crys(V)$ comes with a decreasing filtration $\{D_\crys(V)^i\}_i$ such that $$\cap_i D_\crys(V)^i = 0 \mbox{ and } \cup_i D_\crys(V) = D_\crys(V).$$  In addition, $D_\crys(V)$ comes with a $1_E\otimes \sigma$-semilinear map $\varphi:D_\crys(V) \rightarrow D_\crys(V)$.  Call $V$ is \emph{crystalline} if $\dim_E V$ equals the rank of $D_\crys(V)$ as a $E\otimes_{\QQ_\ell} K$-module.  

Suppose $V$ is an $E$-linear crystalline $G_K$-representation with Hodge-Tate filtration in the interval $[-(a+\ell -1), -a]$.  Consider the category $\mathrm{MF}^{a,a+\ell}(\OO_E)$ of strongly divisible lattices in $D_\crys(V)$ whose objects consist of finite free $\OO:=\OO_E \otimes_{\ZZ_\ell} \OO_K$-lattices $L \subset D_\crys(V)$ with a filtration $\{L^i:=L\cap D_\crys(V)^i\}$ and $1_{\OO_E}\otimes \sigma$-semilinear maps $\{\varphi_i^L: L^i \rightarrow L\}$ such that
\begin{enumerate}
\item $L^i \supset L^{i+1}, L^{a} = L, L^{a+\ell} = 0$ and each $L^i$ is a direct summand of $L$,

\item $\varphi_i^L|_{L^{i+1}} = \ell\varphi_{i+1}^L$ and $L = \sum_i \varphi_i^L(L^i)$.
\end{enumerate}
Then Fontaine-Laffaille \cite{Fontaine-Laffaille} gives an equivalence of categories between $\mathrm{MF}^{a,a+\ell}(\OO_K)$ and the category of $\OO_E[G_K]$-modules that are finitely generated subquotients of $E$-linear crystalline $G_K$-representations $V$ with Hodge-Tate weights in the interval $[-(a+\ell -1), -a]$.

\begin{rem}
Here we use the definition of the Tate twist of a strongly divisible lattice as in Section 4 of \cite{Bloch-Kato} to extend the results of \cite{Fontaine-Laffaille} to the case where $a \neq 0$.
\end{rem}

\begin{expl}\label{unramex}
Let $\psi: G_K \rightarrow \OO_E$ be an unramified character of finite order and let $\OO_E(\psi)$ denote the $\OO_E[G_K]$-module of rank one with $G_K$-action given by $\psi$.  Then the strongly divisible lattice $D_\psi$ corresponding to $\OO_E(\psi)$ can be described as a free rank one $\OO$-module such that $L_\psi^0 = L_\psi$, $L_\psi^1 = 0$ and $\varphi_0^{L_\psi}$ is multiplication by some $u \in \OO^\times = (\OO_E^\times)^{[K:\QQ_\ell]}$.  Denote this $u$ by $\psi(\sigma)$.  

We adopt this notation since over some finite extension of $E$, we have that $L_\psi$ is isomorphic to a strongly divisible lattice $L$ where $\varphi_0^L$ is multiplication by $(\psi(\sigma), 1,\dots, 1)$ (see \cite{Dousmanis}).  In any case, the precise value of $u$ will not be important for our intended application.
\end{expl}

For the remainder of the section, we assume $\ell$ is unramified in $F$ (a totally real extension of $\QQ$ of degree $d$) and set $K = F_v$ for a place $v$ of $F$ dividing $\ell$.

\begin{expl}\label{modformex}
Let $f$ be a newform on $F$ in $S_k(\mf{n},\psi)$.  For a prime $v|\ell$ of $F$, let $E = K_{f,\lambda}F_v$ and consider the Galois representation $\rho_{f,\lambda}|_{G_v} : G_v \rightarrow \GL_2(E)$.  Let $V_{f,\lambda}$ be a 2-dimensional $E$ vector space on which $G_v$ acts by $\rho_{f,\lambda}|_{G_v}$.  Fix a $G_v$-stable $\OO_E$-lattice $T_{f,\lambda} \subset V_{f,\lambda}$.  Note that $V_{f,\lambda}$ is crystalline with labeled Hodge-Tate weights $(-\frac{k_0 -2+k_{\tau_i}}{2},-\frac{k_0 - k_{\tau_i}}{2})_i$ if $\ell>k_0$ is unramified in $F$ and prime to $\mf{n}$.  Thus for $v$ dividing such $\ell$, there is a $L_f$ in $\mathrm{MF}^{0,\ell}(\OO_{F_v})$ corresponding to $T_f$.  Then using the Hodge-Tate weights, we have the following description of $L_f$.  

Set $T_0 = \{j | k_{\tau_j} = k_0\}$.  For $i\geq 1$, define $k_i$ to be the $\max_{j\not\in T_{i-1}}\{k_{\tau_j}\}$ and set $T_i = \{j | k_{\tau_j} = k_i\}.$  Let $s$ be the index such that $k_s = \min_i \{k_{\tau_i}\}$.  Set $e_i$ denote the element of $\OO = \OO_E^{[F_v:\QQ_\ell]}$ with a 1 in its $i$th component and zeroes everywhere else.  Finally, define $$d_{\geq i} = \sum_{j\in \cup_{k\geq i}T_i} e_j \mbox{ \ and \ } d_{ <i} = \sum_{j\not\in \cup_{k < 1} T_i} e_j.$$  Then there is an $\OO$ basis $x,y$ of $L_f$ such that the filtration satisfies:
$$L_f^i = \left\{\begin{array}{ll} \OO x \oplus \OO y, & \mbox{ for } i \leq 0,\\ \OO x \oplus \OO d_{\geq 1}y, & \mbox{ for } 1 \leq i \leq \frac{k_0-k_{1}}{2},\\ \OO  x \oplus \OO d_{\geq 2}y, & \mbox{ for } \frac{k_0 - k_{1}}{2} + 1 \leq i \leq \frac{k_0-k_{2}}{2},\\ \vdots & \vdots \\ \OO x, & \mbox{ for } \frac{k_0 - k_{s}}{2} + 1 \leq i \leq \frac{k_0 -2+k_{\tau_s}}{2},\\ \OO d_{<s}x, & \mbox{ for } \frac{k_0 -2+ k_{s}}{2} + 1 \leq i \leq \frac{k_0 -2+k_{s-1}}{2}\\ \vdots & \vdots\\\OO d_{<1}x, & \mbox{ for } \frac{k_0 -2+ k_{1}}{2} + 1 \leq i \leq k_0 - 1, \\  0, & \mbox{ for } i \geq k_0.\end{array}\right.$$  This is not enough to completely identify $L_f$ up to isomorphism, but it will be enough for our purposes.  

We fix some notation for use in Proposition \ref{l=p}.  Let $$\varphi_0^{L_f}(x) = \alpha x + \beta y$$ for some $\alpha, \beta$  in $\OO$.  So writing $\alpha = (\alpha_i)$ and $\beta = (\beta_i)$, we conclude that the $\lambda$-adic valuations $v_\lambda(\alpha_i)$ and $v_\lambda(\beta_i)$ are at least $k_0-1$ for all $i$ by condition 2 of the definition of the objects of $\mathrm{MF}^{0,\ell}(\OO_{F_v})$.  Here we normalized $v_\lambda$ so that $v_\lambda(\ell) = 1$.
\end{expl}

\begin{prop}\label{l=p}
Suppose $f$ is a Hilbert newform on $F$ of weight $k=(k_1,\dots,k_d)$, level $\mf{n}$, and character $\psi$.  Assume at least one $k_i > 2$ and set $k_0 = \max\{k_i\}$.  Then for $\ell > 2k_0$, unramified in $F$ and prime to $\mf{n}$,  $$H^0(G_v, \bar\varepsilon\otimes \ad^0\bar\rho_{f,\lambda}) = 0.$$
\end{prop}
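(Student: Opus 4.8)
The plan is to work on the $D_{\mathrm{crys}}$/strongly-divisible-lattice side, translating the vanishing of $H^0(G_v, \bar\varepsilon \otimes \ad^0\bar\rho_{f,\lambda})$ into a statement about Frobenius eigenvalues on the mod $\ell$ reduction $\bar L_f = L_f/\ell L_f$ in the Fontaine--Laffaille category. Since $\ell > 2k_0$, the Hodge--Tate weights of $V_{f,\lambda}$ all lie in $[-(k_0-1), 0]$, so after a Tate twist $\bar\varepsilon \otimes \ad^0\bar\rho_{f,\lambda}$ still has weights in an interval of length less than $\ell$, and Fontaine--Laffaille applies: a $G_v$-invariant line in $\bar\varepsilon \otimes \ad^0\bar\rho_{f,\lambda}$ corresponds to a rank-one subobject of the associated strongly divisible lattice. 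Concretely, I would pass to $\bar{\OO} = \bar k_{f,\lambda} \otimes_{\ZZ_\ell} \OO_{F_v}$, decompose everything into its $[F_v:\QQ_\ell]$ components indexed by embeddings, and write the $\varphi_i$-structure on the rank-three module $\mathrm{MF}$-object attached to $\ad^0$ explicitly in the basis of Example~\ref{modformex}, using the filtration jumps recorded there together with the fixed notation $\varphi_0^{L_f}(x) = \alpha x + \beta y$.

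The key steps, in order, are: (1) record that for $\ell > 2k_0$ the relevant twisted adjoint representation is still in the Fontaine--Laffaille range, so $H^0(G_v, -)$ is computed by the morphisms from the trivial object into the corresponding filtered $\varphi$-module; (2) compute the filtered $\varphi$-module $\bar M$ of $\ad^0\bar\rho_{f,\lambda}$ componentwise from $\bar L_f$: its filtration is determined by the filtration data of $L_f$ tensored against its dual, and its Frobenius is determined by $\alpha, \beta$ and the unramified constant $u = \psi_f(\sigma)$-type datum from Example~\ref{unramex}; (3) Tate-twist by $\bar\varepsilon$, which shifts the filtration by one and multiplies $\varphi$ by $\ell^{-1}$ on the appropriate graded piece; (4) observe that a nonzero $G_v$-invariant vector would give a $\varphi$-stable, filtration-compatible rank-one $\bar{\OO}$-summand, and then derive a contradiction from the valuation bounds $v_\lambda(\alpha_i), v_\lambda(\beta_i) \geq k_0 - 1$ recorded in Example~\ref{modformex}: in each embedding-component the would-be eigenvector equation forces $\varphi$ to act by a unit on a piece where the structure constants are divisible by $\ell$, or forces a filtration jump in the wrong degree, since $k_0 > 2$ guarantees the filtration of $\ad^0$ is genuinely nontrivial (it has a jump strictly between its top and bottom).

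The main obstacle I expect is step (2)--(4): carefully bookkeeping the $\varphi_i$ maps on the adjoint lattice component-by-component and checking that the filtration of the twisted adjoint object has no rank-one sub-object fixed by $\varphi$. The subtlety is that $\ad^0$ is only a \emph{subquotient} (the trace-zero part) of $\bar L_f \otimes \bar L_f^\vee$, so I must verify the Fontaine--Laffaille functor behaves well on this subquotient in the range $\ell > 2k_0$ (this is exactly why the bound is $2k_0$ rather than $k_0$: the adjoint has Hodge--Tate weights roughly in $[-(k_0-1), k_0-1]$, an interval of length $2k_0 - 2 < \ell$), and then run the eigenvector analysis using that at least one $k_i > 2$ makes the filtration of $\ad^0$ in that component strictly finer than the split case, ruling out the semisimple-looking invariant line that the mod-$\ell$ cyclotomic twist would otherwise allow. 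A secondary, more routine obstacle is keeping the cohomological normalization of $\det\rho_{f,\lambda} = \psi_f^{-1}\varepsilon^{1-k_0}$ consistent through the twist so that the exponents on $\bar\varepsilon$ land correctly.
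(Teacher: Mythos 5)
Your overall route is the same as the paper's: pass to Fontaine--Laffaille modules, identify $H^0(G_v,\bar\varepsilon\otimes\ad^0\bar\rho_{f,\lambda})$ with the $\varphi_0$-fixed vectors in the $0$-th filtered piece of the associated torsion object, and contradict the valuation bounds on $\varphi_0^{L_f}(x)=\alpha x+\beta y$ from Example \ref{modformex}. The gap is in the step where you actually close the argument. You attribute the need for $k_0>2$ to the filtration of $\ad^0$ being ``genuinely nontrivial, with a jump strictly between top and bottom''; that is not the mechanism. The adjoint filtration has the same qualitative shape when all $k_i=2$ (jumps at $0$ and $\pm(k_{\tau_i}-1)$), and the argument genuinely fails in parallel weight $2$, consistent with the hypothesis $k_0\geq 3$ in Theorem \ref{mainthm}. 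The actual contradiction is a sharp valuation count: the $0$-th filtered piece of the lattice attached to $\ad^0 T_f(1)\cong(\Sym^2 T_f\otimes\OO_E(\psi_f))(k_0)$ is $L_f^1\otimes L_f^{k_0-1}\otimes L_{\psi_f}^0=\OO d_{<1}(x\otimes x\otimes w)$, and $\varphi_0$ sends its generator to a unit multiple of $(\alpha^2/\ell^{k_0})\,x\otimes x\otimes w$ plus terms off this line. A nonzero fixed vector mod $\lambda$ therefore forces $v_\lambda(\alpha_i^2)=k_0$ in some component, while $v_\lambda(\alpha_i)\geq k_0-1$ gives $2k_0-2\leq k_0$, i.e.\ $k_0\leq 2$. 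This inequality, not a filtration-shape statement, is where $k_0>2$ enters; your formulation (``forces $\varphi$ to act by a unit on a piece where the structure constants are divisible by $\ell$'') does not isolate this count and by itself would not exclude $k_0=2$.

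Second, the ``main obstacle'' you flag --- that $\ad^0$ is only a subquotient of $L_f\otimes L_f^\vee$ --- is sidestepped in the paper: since $\det\rho_{f,\lambda}=\psi_f^{-1}\varepsilon^{1-k_0}$, one has $\ad^0 T_f(1)\cong(\Sym^2 T_f\otimes_{\OO_E}\OO_E(\psi_f))(k_0)$, so only compatibility of the Fontaine--Laffaille functor with tensor products (Fontaine--Messing) and with Tate twists of strongly divisible lattices is needed; no duals or trace-zero subquotients appear. Working with this twist also makes your determinant bookkeeping automatic, since the whole object lies in $\mathrm{MF}^{-k_0,k_0-1}(\OO_{F_v})$ once $\ell>2k_0$. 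If you rewrite step (2)--(4) in this form and replace your heuristic for $k_0>2$ by the valuation count above, your plan becomes the paper's proof.
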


\begin{proof}
We retain the notation from Examples \ref{unramex} and \ref{modformex}.  Since $\det \rho_{f,\lambda} = \psi_f^{-1}\varepsilon^{1-k_0}$, we have the Galois-stable lattice $$\ad^0T_f(1) \cong (\Sym^2(T_f) \otimes_{\OO_E}\OO_E(\psi_f))(k_0).$$  Since $\ell > 2k_0$, we can apply the Fontaine-Laffaille functor to get a corresponding $L$ in $\mathrm{MF}^{-k_0,k_0-1}(\OO_{F_v})$.  By \cite[Proposition 1.7]{Fontaine-Messing}, $$L \cong (\Sym^2(L_f) \otimes_\OO L_\psi)(k_0).$$  Note that by \cite[Lemma 4.5]{Bloch-Kato}, \begin{equation}\label{l=pvanisheq}H^1(G_v, \ad^0\bar\rho_{f,\lambda}) \cong \Ker(1 - \varphi_0^L:L^0/\lambda L^0 \rightarrow L/\lambda L).\end{equation}  Furthermore, by the definition of Tate twists for strongly divisible lattices,
\begin{eqnarray*}
L^0 & = &(\Sym^2(L_f) \otimes_\OO L_{\psi_f})^{k_0}\\
& = &\left\{v\otimes w\left| v = \sum_i a_i(u\otimes u') \in \Sym^2(L_f), u \in L_f^i, u' \in L_f^{i'}, i + i' = k_0\right.\right\}\\
& = &L_f^1\otimes_\OO L_f^{k_0-1} \otimes_\OO L_{\psi_f}^0\\
& = &\OO d_{<1}(x\otimes x \otimes w)
\end{eqnarray*}
where $w$ is a generator of the rank one $\OO$-module $L_{\psi_f}$.  Set $v = x\otimes x \otimes w$.  Then we have 
\begin{eqnarray}
\nonumber \varphi_0^L(ad_{<1}v)& = &a^\sigma d_{<1}^\sigma\varphi_1^{L_f}(x) \otimes \varphi_{k_0-1}^{L_f}(x) \otimes \varphi_0^{L_{\psi_f}}(w)\\
& = &\frac{(ad_{<1})^\sigma\psi_f(\sigma)\alpha^2}{\ell^{k_0}}x\otimes x\otimes w + \cdots\label{phi_0}
\end{eqnarray} where the superscript $\sigma$ denotes the action of $1_{\OO_E} \otimes \sigma$ on the given element of $\OO$.  Suppose that (\ref{l=pvanisheq}) is nonzero.  Thus if $ad_{<1}(x\otimes x\otimes w)$ is a nonzero element of the kernel of $1 - \varphi_0^L$ then (\ref{phi_0}) implies that there is some $i$ such that $$\frac{(ad_{<1})_i^\sigma\psi_f(\sigma)\alpha_i^2}{\ell^{k_0}} \equiv (ad_{<1})_i \not\equiv 0 \bmod \lambda.$$  This implies that the $\lambda$-adic valuation of the numerator is $k_0$.  As $v_\lambda(a_i\psi_f(\sigma))= 0$, this means that $$v_\lambda(\alpha_i^2) = k_0.$$  But we also know that $v_\lambda(\alpha_i)\geq k_0-1$, so we have that $2k_0-2 \leq k_0$.   As we assumed $k_0 > 2$, this proves the proposition.
\end{proof}

We are now ready to prove Theorem \ref{mainthm}.

\begin{thm*}
Let $f$ be as in Proposition $\ref{l=p}$ and suppose that it does not have CM and that it is not a twist of a base change of a Hilbert newform on $E \subsetneq F$.  Then $H^2(G_{F,S}, \ad^0\bar\rho_{f,\lambda}) = 0$ for almost all primes $\lambda$ of $K_f$.  That is, $\DD_{\bar\rho_{f,\lambda}}^{\det=\delta}$ is unobstructed for almost all $\lambda$.
\end{thm*}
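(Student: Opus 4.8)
The plan is to check that $\rho=\rho_{f,\lambda}$ satisfies the three hypotheses of Proposition~\ref{unobscriterion} for all but finitely many primes $\lambda$ of $K_f$, and then to read off the conclusion from the fixed-determinant analogue of Theorem~\ref{funddefthm}. First I would recall that $\bar\rho_{f,\lambda}$ is absolutely irreducible for almost all $\lambda$ by \cite[Proposition~3.1]{Dimitrov1}; for such $\lambda$ the functor $\DD_{\bar\rho_{f,\lambda}}^{\det=\delta}$ is representable by $R_{f,\lambda}$, and since in the analogue of Theorem~\ref{funddefthm} one replaces $\ad\bar\rho$ by $\ad^0\bar\rho$, the relation $d_2=\dim_{k_{f,\lambda}}H^2(G_{F,S},\ad^0\bar\rho_{f,\lambda})$ shows that unobstructedness of $R_{f,\lambda}$ is exactly the vanishing $H^2(G_{F,S},\ad^0\bar\rho_{f,\lambda})=0$ that Proposition~\ref{unobscriterion} delivers.

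For hypothesis~(1) I would split the finite set $S$ into the places $v\nmid\ell$ and the places $v\mid\ell$. For $v\nmid\ell$: by Carayol \cite{Carayol} the component $\pi_v$ of the automorphic representation attached to $f$ is arithmetic and in Langlands correspondence with $\rho_{f,\lambda}|_{G_v}$, and since $\pi_v$ is infinite-dimensional it is principal series, special, or supercuspidal (the extraordinary case being supercuspidal). The principal series and supercuspidal cases give $H^0(G_v,\bar\varepsilon\otimes\ad^0\bar\rho_{f,\lambda})=0$ for almost all $\lambda$ by Corollary~\ref{princeseriescor}, and the special case by Proposition~\ref{specialcase}. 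Since $S$ is finite, only finitely many $\lambda$ are excluded in total. For $v\mid\ell$: I would invoke Proposition~\ref{l=p}, which gives the same vanishing whenever $\ell>2k_0$ is unramified in $F$ and prime to $\mathfrak{n}$, again excluding only finitely many $\lambda$. (This is the step where the hypothesis $k_0\geq 3$ is genuinely used.)

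For hypotheses~(2) and~(3) I would appeal directly to Dimitrov. The vanishing $H^1_f(G_F,A_{\rho_{f,\lambda}})=0$ for almost all $\lambda$ follows from the vanishing of the adjoint Selmer group in \cite[Theorem~B(i)]{Dimitrov3}; this is precisely where the assumptions that $f$ has no CM and is not a twist of a base change from a proper subfield $E\subsetneq F$ enter, as they guarantee a large-image statement for $\bar\rho_{f,\lambda}$ for almost all $\lambda$. The vanishing $H^1_f(G_F,V_{\rho_{f,\lambda}}(1))=0$ for almost all $\lambda$ follows from \cite[Theorem~2.1]{Dimitrov4}. Assembling (1), (2), (3), Proposition~\ref{unobscriterion} gives $H^2(G_{F,S},\ad^0\bar\rho_{f,\lambda})=0$ for almost all $\lambda$, which is the assertion.

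The genuinely substantive inputs --- the $v\mid\ell$ case of hypothesis~(1) via Fontaine--Laffaille theory (Proposition~\ref{l=p}) and the vanishing of the Bloch--Kato Selmer groups (Dimitrov) --- are already in hand, so the remaining work is essentially bookkeeping: making sure that at each step only finitely many $\lambda$ are thrown away, that these finite exclusion sets can be amalgamated while still leaving almost all $\lambda$, and that the running hypotheses on $f$ (no CM, not a base-change twist, weight $k_0\geq 3$ together with the standing parity condition) are exactly those required by the cited theorems of Dimitrov and by Proposition~\ref{l=p}. I expect no real difficulty here beyond this care; the one point worth stating explicitly is the reduction, noted above, of ``$R_{f,\lambda}$ unobstructed'' to the cohomological vanishing, via the $\ad^0$ version of Theorem~\ref{funddefthm}.
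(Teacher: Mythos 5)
Your overall architecture matches the paper's: verify hypotheses (1)--(3) of Proposition \ref{unobscriterion} for almost all $\lambda$, with (1) split as $v\nmid\ell$ (Corollary \ref{princeseriescor} and Proposition \ref{specialcase}) versus $v\mid\ell$ (Proposition \ref{l=p}), and (3) via Dimitrov's results on the adjoint Selmer group. That part is fine, as is the bookkeeping about finitely many excluded $\lambda$.

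However, there is a genuine gap in your treatment of hypothesis (2). What Dimitrov's theorems (Theorem B(i) of \cite{Dimitrov3} together with \cite[Theorem 2.1]{Dimitrov4}) actually give is the vanishing of the \emph{untwisted} Selmer group $H^1_f(G_F, V_{\rho_{f,\lambda}})$ for almost all $\lambda$; hypothesis (2) requires the vanishing of the \emph{Tate-twisted} group $H^1_f(G_F, V_{\rho_{f,\lambda}}(1))$, and you simply cite \cite[Theorem 2.1]{Dimitrov4} for that, which it does not provide. The paper bridges this with a real argument: since $H^0(G_F,\varepsilon\otimes\ad^0\rho_{f,\lambda})=0$ (Schur's lemma), the formula of \cite[Remark II.2.2.2]{Fontaine-Perrin} gives
\[
\dim_E H_f^1(G_F, V_{\rho_{f,\lambda}}(1)) - \dim_E H_f^1(G_F, V_{\rho_{f,\lambda}}) \;=\; -\dim_E t_{V_{\rho_{f,\lambda}}} + \sum_{v\mid\infty} \dim_E H^0(G_v, V_{\rho_{f,\lambda}}),
\]
where $t_V \cong D_\crys(V)/D_\crys(V)^0$ is the local tangent space at the places over $\ell$ (this is where crystallinity, i.e.\ $\ell$ large, unramified, prime to $\mathfrak{n}$, is used again); a computation with the Hodge filtration on $\ad^0$ shows the right-hand side vanishes, so the twisted and untwisted Selmer groups have equal dimension and the twisted one vanishes as well. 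Without this step (or some substitute duality/Euler-characteristic argument) your proof of hypothesis (2) does not go through. A smaller point: for hypothesis (3) the paper uses \cite[Theorem 6.6]{Dimitrov3} and the inclusion $H^1_f(G_F, A_{\rho_{f,\lambda}})\subset H^1_\Sigma(G_F, A_{\rho_{f,\lambda}})$ rather than Theorem B(i) directly, but that is only a citation-level discrepancy, not a gap in the argument.
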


\begin{proof}
We verify that the hypotheses (1) -- (3) of Proposition \ref{unobscriterion} hold for almost all $\lambda$.  Combining the results of Corollary \ref{princeseriescor}, Proposition \ref{specialcase} and Proposition \ref{l=p} shows that $H^0(G_v, \ol{\varepsilon}\otimes\ad^0\ol{\rho}_{f,\lambda}) = 0$ for almost all $\lambda$.  The Selmer group $H^1_f(G_F, A_{\rho_{f,\lambda}})$ vanishes for almost all $\lambda$ because of \cite[Theorem 6.6]{Dimitrov3}, \cite[Theorem 2.1]{Dimitrov4} and the fact that (in Dimitrov's notation) $$H_f^1(G_F, A_{\rho_{f,\lambda}}) \subset H^1_\Sigma(G_F, A_{\rho_{f,\lambda}})$$ for any finite set of primes $\Sigma$.  

Theorem B(i) of \cite{Dimitrov3} and \cite[Theorem 2.1]{Dimitrov4} tell us that $H_f^1(G_F, V_{\rho_{f,\lambda}}) = 0$ for almost all $\lambda$.  To show the vanishing of $H_f^1(G_F, V_{\rho_{f,\lambda}}(1))$, we define for a place $v |\ell$ of $F$ the tangent space $$t_V = ((B_\crys/B_\crys^+) \otimes_{\QQ_\ell} V)^{G_{F_v}}$$ of a crystalline $E$-linear representation $V$ where $E$ is a finite extension of $\QQ_\ell$ containing $K_{f,\lambda}$ and $F_v$.    Then \cite[Proposition I.2.2.2(ii)]{Fontaine-Perrin} tells us that $$t_V \cong D_\crys(V)/D_\crys(V)^0.$$  In particular, for $V_{\rho_{f,\lambda}}$, we extend scalars to a finite extension $E$ of $\QQ_\ell$ such that $E$ contains $F_v$ for all places $v | \ell$ in $F$ and set $$t_{V_{\rho_{f,\lambda}}} = \bigoplus_{v|\ell} t_{V_{\rho_{f,\lambda}|_{G_v}}}.$$  By Schur's lemma, $H^0(G_F, \varepsilon\otimes\ad^0\rho_{f,\lambda}) = 0$, so \cite[Remark II.2.2.2]{Fontaine-Perrin} implies that $$\dim_E H_f^1(G_F, V_{\rho_{f,\lambda}}(1)) - \dim_E H_f^1(G_F, V_{\rho_{f,\lambda}}) = -\dim_E t_{V_{\rho_{f,\lambda}}} + \sum_{v|\infty} H^0(G_v, V_{\rho_{f,\lambda}}).$$  A straightforward computation using the Hodge-filtration on $V_{\rho_{f,\lambda}}$ shows that the right-hand-side vanishes, so $\dim_EH_f^1(G_F,V_{\rho_{f,\lambda}}(1)) = \dim_E H_f^1(G_F, V_{\rho_{f,\lambda}})$.  Thus Proposition \ref{unobscriterion} implies the unobstructedness of $\DD_{\bar\rho_{f,\lambda}}^{\det=\delta}$ for almost all $\lambda$.
\end{proof}

\section{Explicit computations}

The methods we used to prove Theorem \ref{mainthm} are essentially effective in the sense that given enough information about the Hecke eigenvalues of a given Hilbert newform as well as the eigenvalues of the other newforms of the same level, one can find an explicit lower bound $B$ such that for all $\ell\geq B$, the deformation problem $\DD_{\bar\rho_{f,\lambda}}^{\det=\delta}$ is unobstructed for all $\lambda$ over these $\ell$.  We illustrate this with an example.

Let $F = \QQ(\sqrt{5})$, $k = (2,4)$, and $\mf{n} = (3+\omega)$ where $\omega = \frac{1+\sqrt{5}}{2}$.  Then using MAGMA we computed that the space of cuspforms $S_k(\mf{n})$ is one dimensional and that, moreover, there are no cuspforms of lower level.  Thus $S_k(\mf{n})$ is generated by a newform $f$ whose first few Hecke eigenvalues $c(f,\p)$ we computed in MAGMA and list in Table 1.

\begin{table}[h]
\begin{center}\caption{Hecke eigenvalues of $f$}\vspace{0cm}
\begin{tabular}{|c|c|c|c|c|c|}
\hline
$\pi$ & 2 & $\sqrt{5}$ & 3 & $4 - \omega$ & $4+\omega$ \\
\hline
$c(f,\p)$& $-2\sqrt{5} - 10$ & $5\sqrt{5} - 5$ & $-6\sqrt{5}$ & $-15\sqrt{5} + 17$ & $44\sqrt{5} - 60$ \\
\hline\hline
$\pi$ & $5-\omega$ & $5+\omega$& $6 - \omega$ & $5\omega - 2$ & $5\omega - 3$  \\
\hline
$c(f,\p)$ & $15\sqrt{5} + 55$ & $14\sqrt{5} -20$ & $-58\sqrt{5}$ & $-15\sqrt{5} - 3$  & $-30\sqrt{5} - 118$ \\ 
\hline
\end{tabular}
\end{center}
\end{table}

\begin{rem}
It can be shown that $K_f = \QQ(\sqrt{5})$ in this case.
\end{rem}

\begin{rem}
In what follows, note that $S = \{\mf{n}, \lambda\} \cup \{v|\infty\}$ for $\ol{\rho}_{f,\lambda}$.
\end{rem}

\begin{prop}
The deformation problem $\DD_{\bar\rho_{f,\lambda}}^{\det=\delta}$ is unobstructed for all primes $\lambda$ of $K_f$ over $\ell\geq 11$ and not dividing $\mf{n}$.
\end{prop}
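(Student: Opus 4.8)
The plan is to apply Proposition \ref{unobscriterion} to $\rho=\rho_{f,\lambda}$, checking its three hypotheses for every prime $\lambda$ of $K_f$ over a rational prime $\ell\ge 11$ with $\lambda\nmid\mf n$, and bounding the exceptional primes at each step. The relevant numbers: $k_0=\max\{2,4\}=4$, so $2k_0=8$; the only prime ramifying in $F=\QQ(\sqrt5)$ is $5$; and $\mf n=(3+\omega)$ is prime with $N_{F/\QQ}(\mf n)=N\big((7+\sqrt5)/2\big)=11$, so $\mf n$ is one of the two primes of $F$ over $11$ (the other being $\overline{\mf n}=(4-\omega)$) and $S=\{v\mid\mf n\ell\}\cup\{v\mid\infty\}$. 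The bound $\ell\ge 11$ comes from nothing more than the hypothesis $\ell>2k_0$ of Proposition \ref{l=p}; every other constraint will exclude only primes $\le 7$, except for the borderline prime $\ell=11$, where the coincidence $11=N(\mf n)$ is the one point needing care.

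\emph{Hypothesis (1) at the finite places of $S$.} For $\ell\ge 11$ we have $\ell>2k_0$, $\ell$ unramified in $F$, and --- provided $\ell\ne 11$ --- every $v\mid\ell$ is coprime to $\mf n$, so $f$ has good reduction at $v$, $\rho_{f,\lambda}|_{G_v}$ is crystalline, and Proposition \ref{l=p} gives $H^0(G_v,\bar\varepsilon\otimes\ad^0\bar\rho_{f,\lambda})=0$ there with no further bad primes. At $v=\mf n$, which exactly divides the level, $\pi_{\mf n}$ is special with unramified twist, so no twist of $f$ is needed and I would run the argument of Proposition \ref{specialcase} with explicit constants: with $q=N(\mf n)=11$, Lemmas \ref{reductionform} and \ref{semisimplevanishing} require $\lambda\nmid 2q(q^2-1)=2640=2^4\cdot 3\cdot 5\cdot 11$ and \cite[Theorem 0.1]{Jarvis} requires $q\not\equiv 1\bmod\ell$, all valid for $\ell\ge 13$; a nonvanishing of $H^0(G_{\mf n},\bar\varepsilon\otimes\ad^0\bar\rho_{f,\lambda})$ would then produce a newform congruent to $f$ of level dividing $\mf n/\mf n=\OO_F$, contradicting the absence of cuspforms of weight $(2,4)$ and level $\OO_F$. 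For the remaining case $\ell=11$, $\lambda=\overline{\mf n}$, one has $S=\{\mf n,\overline{\mf n}\}\cup\{v\mid\infty\}$: at the good-reduction place $\overline{\mf n}$ the Fontaine-Laffaille computation of Proposition \ref{l=p} still applies (it uses only crystallinity at that place), while at $\mf n$, which lies over $\ell$ but is special, $\rho_{f,\lambda}|_{G_{\mf n}}\cong\left(\begin{smallmatrix}\varepsilon\chi_\lambda & \ast\\0 & \chi_\lambda\end{smallmatrix}\right)$ is semistable but not crystalline, and I would argue directly: in the filtration of $\bar\varepsilon\otimes\ad^0\bar\rho_{f,\lambda}|_{G_{\mf n}}$ the trivial subquotient sits as a quotient rather than a sub, and since the off-diagonal cocycle is ramified while $\bar\varepsilon$ is ramified and nontrivial at $\mf n$, no trivial subrepresentation splits off, so the invariants vanish. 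The real places of $F$ impose no condition on $H^2(G_{F,S},-)$ because $\ell$ is odd.

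\emph{Hypotheses (2) and (3).} The vanishing of $H^1_f(G_F,V_{\rho_{f,\lambda}}(1))$ and $H^1_f(G_F,A_{\rho_{f,\lambda}})$ I would obtain exactly as in the proof of Theorem \ref{mainthm}: from \cite[Theorem 6.6, Theorem B(i)]{Dimitrov3} and \cite[Theorem 2.1]{Dimitrov4}, together with the tangent-space computation of that proof relating $H^1_f(V_{\rho_{f,\lambda}}(1))$ to $H^1_f(V_{\rho_{f,\lambda}})$. This requires $f$ to have no CM and not to be a twist of a base change of a newform on a proper subfield of $F$ (the only such being $\QQ$), which one reads off Table 1 and the shape of the level: for example $(4+\omega)$ and $(5-\omega)$ are conjugate primes of norm $19$ with $c(f,4+\omega)=44\sqrt5-60\ne 15\sqrt5+55=c(f,5-\omega)$, so $f$ is not a base change, and a finite check over the quadratic twist characters of conductor dividing $\mf n$ rules out CM and twists of base changes. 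It also requires $\ell$ outside the explicit finite exceptional set of Dimitrov's theorems, which for this $f$ one confirms is contained in $\{2,3,5,7\}$.

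The step I expect to be the main obstacle is not any single estimate but making everything uniformly effective: each ``for almost all $\lambda$'' input --- Proposition \ref{specialcase}, the Selmer vanishing of \cite{Dimitrov3} and \cite{Dimitrov4}, and the absolute irreducibility of $\bar\rho_{f,\lambda}$ needed for $R_{f,\lambda}$ to be defined --- must be replaced by an explicit exclusion set, and one must verify that all of these lie below $11$. The genuinely delicate point is $\ell=11$: because $11=N(\mf n)$, the place $\mf n$ is then simultaneously an $\ell$-adic place and a place dividing the level, a configuration covered neither by Proposition \ref{l=p} nor by Proposition \ref{specialcase} nor by Corollary \ref{princeseriescor}, and it has to be handled by the separate semistable local computation sketched above.
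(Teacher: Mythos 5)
Your outline follows the paper's general template (check the three hypotheses of Proposition \ref{unobscriterion} with explicit constants), but it leaves out what is actually the bulk of the paper's argument. Citing Dimitrov ``exactly as in the proof of Theorem \ref{mainthm}'' only yields vanishing of the two Selmer groups for \emph{almost all} $\lambda$; to get every $\lambda$ over $\ell\ge 11$ one must verify, for each such $\lambda$, the large-image hypothesis that Dimitrov's theorems require --- concretely, that $\image(\ol{\rho}_{f,\lambda})$ contains a conjugate of $\SL_2(k_{f,\lambda})$ --- and, for hypothesis (3), that the congruence ideal attached to $f$ is trivial. The paper does exactly this: absolute irreducibility via Dimitrov's criterion leaves possible exceptions at $(4-\omega)$ and at primes over $41$, $101$, $151$, which are eliminated by explicit characteristic-polynomial computations at primes over $29$ and at $(7)$; dihedral projective image is excluded using the unique quadratic extension of $F$ unramified outside $\mf{n}$ (the ray class field $F(\sqrt{\omega(3+\omega)})$) together with the eigenvalues at $(5+\omega)$ and $(5\omega-2)$; the exceptional images $A_4$, $S_4$, $A_5$ are excluded by the weight bound for $\ell\ge 13$ and a separate analysis at $(4-\omega)$; and hypothesis (3) holds because $f$ is the unique newform in $S_k(\mf{n})$, so the congruence ideal is the unit ideal. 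Your statement that the exceptional set of Dimitrov's theorems ``one confirms is contained in $\{2,3,5,7\}$'' is precisely the assertion that needs proof, and it is not automatic: the raw criteria do \emph{not} push all exceptions below $11$, and only the explicit computations above remove them. Checking ``no CM, not a twist of a base change'' (your norm-$19$ eigenvalue comparison) is not a substitute, since it does not by itself make the Selmer-vanishing effective.

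On the local side, you correctly isolate the delicate configuration $\ell=11$, $\lambda=(4-\omega)$, where $\mf{n}$ divides both $\ell$ and the level (the paper itself only asserts the vanishing at $\mf{n}$ for $\lambda\nmid 2,3,5,11$ and Proposition \ref{l=p} assumes $\ell$ prime to $\mf{n}$, so this point is at best glossed over there). But your proposed fix does not close it. The claim that the off-diagonal cocycle of $\ol{\rho}_{f,\lambda}|_{G_{\mf{n}}}$ is ramified mod $\lambda$ has no justification when $\mf{n}\mid\ell$: Lemmas \ref{reductionform} and \ref{semisimplevanishing} are proved under $v\nmid\ell$ (the inertia computation is prime-to-$\ell$, and $\lambda\nmid q$ is needed so that $\bar\varepsilon$ is unramified at $v$), so neither is available here. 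Moreover, ramifiedness is not the right criterion at a place above $\ell$, since $\bar\varepsilon$ is itself ramified there and coboundary classes of the form $x(1-\bar\varepsilon)$ are ramified as well. What is actually needed is that the extension class of $\ol{\rho}_{f,\lambda}|_{G_{\mf{n}}}$ in $H^1(G_{\mf{n}},k_{f,\lambda}(1))$ is nonzero, i.e.\ that the mod-$\lambda$ reduction does not split at this semistable place; if it did split, $\bar\varepsilon\otimes\ad^0\ol{\rho}_{f,\lambda}$ would visibly have a nonzero invariant line. Your sketch gives no argument for this non-splitness, so the case $\ell=11$ remains open in your write-up.
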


\begin{proof}
Our approach is to give a lower bound on $\ell$ for which the residual representation $\ol{\rho}_{f,\lambda}$ is absolutely irreducible and for which the three hypotheses of Proposition \ref{unobscriterion} hold.  We begin with absolute irreducibility.  By \cite[Proposition 3.1(ii)]{Dimitrov1}, since $\omega^{10} - 1 \in \mf{n}$, we conclude that $\ol{\rho}_{f,\lambda}$ is absolutely irreducible for all $\lambda$ not dividing $\omega^{10} -1$, $\omega^{20}-1$, $\omega^{40} - 1$, and $\omega^{50} - 1$.  More concretely, computing the prime factors of the principal ideals generated by these elements, \cite[Proposition 3.1(ii)]{Dimitrov1} tells us that $\ol{\rho}_{f,\lambda}$ is absolutely irreducible for all $\lambda \nmid \mf{n}$ over $\ell \geq 11$ except possibly $(4 - \omega)$ and the primes over $41, 101$ and $151$.  

Note that since $\ol{\rho}_{f,\lambda}$ is an odd representation, it is absolutely irreducible if and only if it is irreducible.  Thus to prove absolute irreducibility for $\ol{\rho}_{f,\lambda}$, it suffices to provide a prime $\p$ over $p \nmid 11\ell$ such that the characteristic polynomial of $\ol{\rho}_{f,\lambda}(\Frob_\p)$ is irreducible over $k_{f,\lambda}$.  Recall that the characteristic polynomial for $\ol{\rho}_{f,\lambda}(\Frob_\p)$ is $X^2 - c(f,\p)X + p^3$ if $p$ splits in $F$ and is $X^2 - c(f,\p)X + p^4$ if $p$ is inert in $F$.  In particular, for each $\lambda$ over $41, 101$, and $151$, we found a prime $\p$ over 29 such that the polynomial $X^2 - c(f,\p)X + 29^3$ is irreducible over $k_{f,\lambda}$.  For $\lambda = (4 - \omega)$, we similarly computed the Hecke eigenvalue using MAGMA for $\p = (7)$ and found that $X^2 - (91\sqrt{5} - 35)X + 7^4$ is irreducible over $\F_{11} = \OO_F/(4-\omega)$.  Thus $\ol{\rho}_{f,\lambda}$ is absolutely irreducible for all $\lambda \nmid \mf{n}$ over $\ell \geq 11$.

We now check hypotheses (1) -- (3) of Proposition \ref{unobscriterion}.  For (1), we know that $H^0(G_\lambda, \ol{\varepsilon}\otimes\ad^0\ol{\rho}_{f,\lambda}) = 0$ for all $\lambda\nmid \mf{n}$ over $\ell > 8$ by Proposition \ref{l=p}.  Furthermore, for $\mf{n}$, we know that the local component $\pi_\mf{n}$ of the automophic representation corresponding to $f$ is special since $f$ is new at $\mf{n}$ and has trivial nebentypus.  In particular, using the second paragraph of the proof of Proposition \ref{specialcase}, we see that $H^0(G_{\mf{n}}, \ol{\varepsilon}\otimes\ad^0\ol{\rho}_{f,\lambda}) = 0$ for $\lambda \nmid 2, 3, 5,$ and 11 since there are no modular forms of weight (2,4) on $\QQ(\sqrt{5})$ of lower level.

Regarding (2), as we discussed in the proof of Theorem \ref{mainthm}, the vanishing of $H_f^1(G_F, V_{\ol{\rho}_{f,\lambda}}(1))$ is equivalent to the condition that $H_f^1(G_F, V_{\ol{\rho}_{f,\lambda}})= 0$.  Furthermore, Dimitrov \cite[Theorem B]{Dimitrov3} showed that $H_f^1(G_F, V_{\ol{\rho}_{f,\lambda}})$ vanishes as long as $\ell > 5$, $\lambda \nmid \mf{n}$, and the image of $\Ind_F^\QQ\ol{\rho}_{f,\lambda}$ is ``large.''  (We will give more details about this large image condition in the next paragraph.)  This means that for the desired set of primes $\lambda$, whenever this ``large image'' condition holds, hypothesis (2) of Proposition \ref{unobscriterion} also holds.  Moreover, let $\eta_f$ denote the congruence ideal obtained from the $\oo$-algebra homomorphism $\mathcal{T}_\mf{m} \rightarrow \oo$ by $T_\mf{a} \mapsto \iota_\lambda(c(f,\mf{a}))$ where $\oo = \OO_{K_f,\lambda}$, $\mathcal{T}$ is the Hecke algebra $\oo[T_\mf{a}| \mf{a} \subset \OO_F]$ and $\iota_\lambda$ is the fixed isomorphism $\CC \rightarrow \ol{K}_{f,\lambda}$ extending the embedding $\OO_{K_f} \hookrightarrow \oo$.  (See \cite[Definition 3.1]{Dimitrov3} as well as the discussions before \cite[Theorems 1.4 and 3.6]{Dimitrov3} for more details about $\eta_f$.)  Then \cite[Theorem 3.6]{Dimitrov3} implies that for $\ell > 5, \lambda \nmid \mf{n}$, and $\Ind_F^\QQ \ol{\rho}_{f,\lambda}$ satisfying the same large image hypothesis, the Selmer group $H_f^1(G_F, A_{\ol{\rho}_{f,\lambda}}) = 0$ if and only if $\eta_f = \oo$.  That is, if and only if $\lambda$ does not divide $\eta_f$.  By definition, however, $\lambda$ divides $\eta_f$ if and only if there is another newform $g$ of the same weight, level and character such that $c(f,\mf{a}) \equiv c(g,\mf{a}) \bmod \lambda$ for all $\mf{a} \subset \OO_F$.  As $f$ is the only newform in $S_k(\mf{n})$, this means that $H_f^1(G_F, A_{\ol{\rho}_{f,\lambda}}) = 0$ for all such $\lambda$.  That is, hypothesis (3) also holds for all $\lambda$ over $\ell > 5, \lambda\nmid \mf{n}$ satisfying the large image condition.  Thus we are reduced to checking that this large image condition on $\Ind_F^\QQ\ol{\rho}_{f,\lambda}$ holds for all $\lambda$ over $\ell \geq 11$ such that $\lambda \neq \mf{n}$.

The large image condition on $\Ind_F^\QQ\ol{\rho}_{f,\lambda}$ that we referred to throughout the previous paragraph is a somewhat technical hypothesis that Dimitrov uses for Theorem 1.4 of \cite{Dimitrov3}.  We refer the interested reader to \cite[Theorem A]{Dimitrov3} for a detailed statement of this large image hypothesis on $\Ind_F^\QQ\ol{\rho}_{f,\lambda}$.  In our case, however, since the weight (2,4) is non-induced in the sense of \cite[Definition 3.11]{Dimitrov1} and we assume that $\ell \geq 11$, we may instead use the large image condition on $\ol{\rho}_{f,\lambda}$ that $\image(\ol{\rho}_{f,\lambda})$ contains a conjugate of $\SL_2(k_{f,\lambda})$ (see \cite[Proposition 3.13]{Dimitrov1}).  Moreover, since we have already shown that $\ol{\rho}_{f,\lambda}$ is irreducible for all $\lambda\nmid \mf{n}$ over $\ell \geq 11$, we can use Dickson's classification of subgroups of $\GL_2(k_{f,\lambda})$ in such cases.  In particular, this classification states that an irreducible subgroup of $\GL_2(k_{f,\lambda})$ that does \emph{not} contain a conjugate of $\SL_2(k_{f,\lambda})$ is isomorphic to either a dihedral group or one of $A_4, S_4,$ or $A_5$.  Thus we need to show that the projective image of $\image(\ol{\rho}_{f,\lambda})\subset \GL_2(k_{f,\lambda})$ is not isomorphic to a dihedral group nor any of the groups $A_4, S_4,$ and $A_5$.

To check that the projective image of $\image(\ol{\rho}_{f,\lambda})$ is not dihedral, we use \cite[Lemma 3.4]{Dimitrov1}.  More specifically, assume that the image of $\ol{\rho}_{f,\lambda}$ in $\PGL_2(k_{f,\lambda})$ is dihedral, meaning $\ol{\rho}_{f,\lambda} \cong \ol{\rho}_{f,\lambda} \otimes \chi_{K/F}$ where $\chi_{K/F}$ is the character of a quadratic extension $K/F$.  Then supposing that $\ell \neq 2k_i -1$ for all $i$ where $k = (k_1,\dots, k_d)$ is the weight of $f$, this lemma says  that $K/F$ is unramified outside of $\mf{n}$.  Thus we have the following method for showing that the image of $\ol{\rho}_{f,\lambda}$ in $\PGL_2(k_{f,\lambda})$ is not dihedral.  For each quadratic field $K$ unramified outside of $\mf{n}$, we find primes $\p$ and $\mf{q}$ of $F$ that are inert in  $K$ and such that $c(f,\p)$ and $c(f,\mf{q})$ do not lie in a common $\lambda\neq \mf{n}$ over $\ell \geq 11$.  More concretely, there is a unique quadratic extension of $F$, unramified outside of $\mf{n}$, namely the ray class field $K = F(\sqrt{\omega(3+\omega)})$ for the modulus $\mf{n}\mf{m_\infty}$ where $\mf{m}_\infty$ contains all of the archimedean places of $F$.  We found that the ideals $\p = (5+\omega)$ and $\mf{q} = (5\omega -2)$ are inert in $K$.  Furthermore, the Hecke eigenvalues for these primes are $c(f,\p) = 14\sqrt{5} - 20$ and $c(f,\mf{q}) = -15\sqrt{5} - 3$.  As the prime divisors of $\mathrm{N}_{F/\QQ}c(f,\p)$ are 2, 5, and 29 while the prime divisors of $\mathrm{N}_{F/\QQ}c(f,\mf{q})$ are 2,3, and 31, we see that for each $\lambda \neq \mf{n}$ over $\ell \geq 11$, there is some prime $\mf{P}$ of $F$ that is inert in $K$ and $c(f,\mf{P})\not\equiv 0 \bmod \lambda$.  Hence the image of $\ol{\rho}_{f,\lambda}$ in $\PGL_2(k_{f,\lambda})$ is not dihedral.

Finally, to show that the projective image of $\ol{\rho}_{f,\lambda}$ is not isomorphic to $A_4, S_4,$ or $A_5$, we use Section 3.2 of \cite{Dimitrov1}.  The main result of this section is that if $$\ell -1 > \frac{5}{d} \sum_{i = 1}^d k_i -1$$ where $d = [F:\QQ]$ and $k=(k_1,\dots, k_d)$ is the weight of the newform $f$, then the projective image of $\ol{\rho}_{f,\lambda}$ is not isomorphic to any of the groups $A_4, S_4$ or $A_5$.  In our case, $d=2$ and $k= (2,4)$ so it is easy to conclude that the image of $\ol{\rho}_{f,\lambda}$ is not isomorphic to $A_4, S_4$, or $A_5$ for $\lambda$ over $\ell\geq 13$.  For $\lambda = (4-\omega)$, a closer analysis of \cite[Section 3.2]{Dimitrov1} shows that if the projective image of $\ol{\rho}_{f,\lambda}$ is $A_4, S_4,$ or $A_5$ then the arguments there imply that either $\pm1$ or $\pm 3$ has order $\leq5$ in $\ZZ/10\ZZ$, which is a contradiction.
\end{proof}

\bibliographystyle{plain}
\bibliography{ltbd}

\end{document}